\definecolor{LinkColor}{rgb}{0,0,0} 
\newtheorem{theorem}{Theorem}[section]
\newtheorem{proposition}[theorem]{Proposition}
\theoremstyle{definition}
\newtheorem{remark}[theorem]{Remark}
\newtheorem{problem}[theorem]{Problem}
\theoremstyle{remark}
\newtheorem{example}[theorem]{Example}
\newcommand{\SL}{\operatorname{SL}}
\newcommand{\PSL}{\operatorname{PSL}}
\newcommand{\Aut}{\operatorname{Aut}}
\newcommand{\Tr}{\operatorname{Tr}}
\newcommand{\Z}{\textup{Z}}
\newcommand{\U}{\mathrm{U}}
\newcommand{\V}{\mathrm{V}}
\newcommand{\ZZ}{\mathbb{Z}}
\newcommand{\QQ}{\mathbb{Q}}
\title{From examples to methods: Two cases from the study of units in integral group rings}
\author{Andreas B\"achle}
\email{\href{mailto:ABaechle@gmx.net}{ABaechle@gmx.net}}
\author{Leo Margolis}
\address{Vakgroep Wiskunde, Vrije Universiteit Brussel, Pleinlaan 2, 1050 Brussels, Belgium}
\email{\href{mailto:leo.margolis@vub.be}{leo.margolis@vub.be}}
\keywords{Integral group ring, unit group, Zassenhaus conjecture, Prime Graph Question}
\subjclass[2010]{16U60, 20C05, 20C15, 20C20}
\thanks{The second author is a postdoctoral researcher of the Research Foundation Flanders (FWO - Vlaanderen).}
\begin{document}
\maketitle

\begin{abstract} In this article, we review the proofs of the first Zassenhaus Conjecture on conjugacy of torsion units in integral group rings for the alternating groups of degree $5$ and $6$, by Luthar-Passi and Hertweck. We describe how the study of these examples led to the development of two methods -- the HeLP method and the lattice method. We exhibit these methods and summarize some results which were achieved using them. We then apply these methods to the study of the first Zassenhaus conjecture for the alternating group of degree $7$ where only one critical case remains open for a full answer. Along the way we show in examples how recently obtained results can be combined with the methods presented and collect open problems some of which could be attacked using these methods.
\end{abstract}

\section{Introduction}
Ever since first being studied by G. Higman in his PhD thesis \cite{Hig40, San81} the structure of the unit group of an integral group ring $\ZZ G$ of a finite group $G$ has fascinated many researchers, as evident e.g.\ from several monographs devoted to the topic \cite{Seh78, Bov87, Seh93, PMS02, JdR16a, JdR16b}. A focal point of the study has been the understanding of units of finite order, i.e.\ \emph{torsion units}, and how their properties are determined by the group base $G$. Denote by $\U(RG)$ the unit group of the group ring $RG$ of the group $G$ over a ring $R$. Certainly, elements of the form $rg$, where $r$ is a unit in $R$ and $g$ an element of $G$, lie in $\U(RG)$ and these are known as \emph{trivial units}. The question on how far an element of $\U(\ZZ G)$ is from being trivial has been studied by various researchers. If $G$ is torsion-free, it is a long standing conjecture from the 1960's attributed to I. Kaplansky that any element of $\U(\ZZ G)$ is in fact a trivial unit \cite[Question 1.135]{Dni69}. On the other hand, when $G$ is a finite group, such a statement can not be expected, as it is easy to construct units of infinite order in $\U(\ZZ G)$ \cite[Chapter 1]{JdR16a} and using those, by conjugation, also non-trivial units of finite order. The strongest possible statement one could still expect was put forward in 1974 by H. Zassenhaus.\\

\noindent\textbf{(First) Zassenhaus Conjecture (ZC1):}\quad Let $G$ be a finite group. Then any unit of finite order in $\U(\ZZ G)$ is conjugate in the rational group algebra $\QQ G$ to a trivial unit of $\ZZ G$.\\

First investigations of (ZC1) focused on special classes of solvable, mostly metabelian, groups, cf.\ e.g.\ \cite{HP72, AH80, LB83, MRSW87}. The first result for a non-solvable group was achieved in 1989 \cite{LP89}.

\begin{theorem}[Luthar-Passi]\label{th:A5}
The Zassenhaus Conjecture holds for the alternating group of degree $5$.
\end{theorem}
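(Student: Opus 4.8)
The plan is to reduce the conjugacy statement to a finite system of arithmetic constraints on the partial augmentations of a torsion unit and then to verify that this system admits only the ``trivial'' solutions. Since the augmentation of a torsion unit is $\pm 1$, after replacing $u$ by $-u$ if necessary we may assume $u\in V(\ZZ A_5)$, the group of units of augmentation $1$. For a conjugacy class with representative $g$ write $\varepsilon_g(u)\in\ZZ$ for the partial augmentation of $u$ on that class, so that $\sum_g\varepsilon_g(u)=1$. By the Berman--Higman theorem $\varepsilon_1(u)=0$ whenever $u\neq 1$, and by the criterion of Marciniak--Ritter--Sehgal--Weiss it suffices to prove that for every torsion unit $u$ and every divisor $d$ of its order the numbers $\varepsilon_g(u^d)$ are non-negative; together with $\sum_g\varepsilon_g=1$ this forces each power $u^d$ to have all its partial augmentation concentrated on a single class, whence $u$ is rationally conjugate to an element of $A_5$.

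To produce the constraints I would use the Luthar--Passi observation that if $D$ affords the irreducible character $\chi$ and $u$ has order $n$, then $D(u)$ has order dividing $n$, so its eigenvalues are $n$-th roots of unity and the multiplicity of a fixed primitive $n$-th root of unity $\xi$ raised to the power $l$ is
$$\mu_l(u,\chi)=\frac1n\sum_{d\mid n}\operatorname{Tr}_{\QQ(\xi^d)/\QQ}\!\bigl(\chi(u^d)\,\xi^{-dl}\bigr)\in\ZZ_{\ge 0},$$
where each $\chi(u^d)=\sum_g\varepsilon_g(u^d)\chi(g)$ is a $\ZZ$-linear combination of the known character values. Feeding in the character table of $A_5$ (the trivial character, the two Galois-conjugate degree-$3$ characters taking the values $\tfrac{1\pm\sqrt5}2$ on the two classes of $5$-cycles, together with the degree-$4$ and degree-$5$ characters) turns the requirement $\mu_l\ge 0$ into a finite system of linear inequalities in the integer unknowns $\varepsilon_g(u^d)$, one system for each admissible order of $u$. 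Since the order of a torsion unit divides $\exp(A_5)=30$, the orders to be examined are $2,3,5$ and the mixed orders $6,10,15,30$.

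For the prime orders the analysis is short: using that $\varepsilon_g(u^d)=0$ whenever the order of $g$ does not divide that of $u^d$, a unit of order $2$ or $3$ has all its augmentation on the unique class of that order, giving $\varepsilon_g=1$ there. The order-$5$ case is the first genuinely delicate point, because $A_5$ has \emph{two} classes $5A,5B$ of $5$-cycles with irrational, mutually Galois-conjugate character values; writing $a=\varepsilon_{5A}(u)$, $b=\varepsilon_{5B}(u)$ with $a+b=1$, the multiplicities $\mu_l(u,\chi_2)$ involve $\operatorname{Tr}_{\QQ(\zeta_5)/\QQ}$ of expressions in $\tfrac{1+(a-b)\sqrt5}2$, and one must combine the non-negativity conditions coming from $\chi_2$ and its conjugate $\chi_3$ with the integrality of $a-b$ to conclude that $(a,b)\in\{(1,0),(0,1)\}$.

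The remaining work, and what I expect to be the main obstacle, is to rule out the mixed orders, for which $A_5$ contains no elements at all. Here one exploits that the lower powers of $u$ are already controlled: for $u$ of order $6$ one has $u^2$ rationally conjugate to $3A$ and $u^3$ to $2A$, so every $\chi(u^d)$ with $d\in\{2,3,6\}$ is a known character value and $\chi(u)=\varepsilon_{2A}(u)\chi(2A)+\varepsilon_{3A}(u)\chi(3A)$ depends on a single free parameter; imposing $\mu_l(u,\chi)\in\ZZ_{\ge0}$ for the degree-$4$ and degree-$5$ characters then yields an infeasible system, so no unit of order $6$ exists, and similarly, with a little more care owing to the two $5$-classes, for orders $10$ and $15$. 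Order $30$ needs no separate argument, since a unit of order $30$ would have a fifth power of order $6$. The technical heart of the proof is thus the bookkeeping of the eigenvalue-multiplicity inequalities together with the Galois trace over $\QQ(\zeta_5)$: it is precisely the systematic organisation of these finitely many inequalities into a solvable integer-linear feasibility problem that crystallises into the HeLP method described below.
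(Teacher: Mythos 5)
Your proposal is correct and follows essentially the route the paper attributes to Luthar--Passi: combine the rational-conjugacy criterion of Theorem~\ref{th:MRSW} with the eigenvalue-multiplicity formula of Theorem~\ref{th:LP}, then run through the divisors of $\exp(A_5)=30$, showing trivial partial augmentation distributions for orders $2,3,5$ and infeasibility for the mixed orders $6,10,15$ (and hence $30$); your constraints do in fact close in each case, e.g.\ the degree-$4$ and degree-$5$ characters jointly eliminate order $6$. The only deviation is that you invoke Hertweck's vanishing result (Proposition~\ref{prop:partaugs}(ii)) to kill partial augmentations on classes of non-dividing order, which was not available to Luthar and Passi and which the paper notes they replaced by a weaker statement plus extra inequality work -- a harmless modernization, not a different method.
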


While only proving (ZC1) for one group, the arguments in \cite{LP89} gave rise to a character-theoretic method of algorithmic nature which strongly influenced the study of $\ZZ G$ during the following decades. For instance, it played an important role in the proof of (ZC1) for cyclic-by-abelian groups \cite{Her08, CMdR13} and influenced the construction of the counterexamples to (ZC1) which finally solved the conjecture \cite{EM18}. After M. Hertweck gave a generalization of the method \cite{HertweckBrauer} it subsequently became known as the HeLP method (\textbf{He}rtweck-\textbf{L}uthar-\textbf{P}assi), a term coined by A. Konovalov. 

Several weaker variations of (ZC1) have been proposed, in particular regarding the orders of units in $\ZZ G$. As one is not interested in the orders coming from the coefficient ring $\ZZ$, these questions are usually formulated for normalized units. Define the augmentation homomorphism $\varepsilon: \ZZ G \rightarrow \ZZ$, $\sum_{g \in G} z_g g \mapsto \sum_{g \in G} z_g$ and the group of \emph{normalized units} in $\ZZ G$ as $\V(\ZZ G) = \{u \in \U(\ZZ G) \ | \ \varepsilon(u) =1 \}$. Then $\U(\ZZ G) = \pm \V(\Z G)$. It was already proved in the 1960's that if $p$ is a prime and $\V(\ZZ G)$ contains an element of order $p^n$ for some integer $n$, then $G$ also contains an element of order $p^n$ \cite{CL65}. The question whether the orders of torsion units in $\V(\ZZ G)$ coincides with the orders of elements in $G$ is often called the \emph{Spectrum Problem}. W. Kimmerle proposed to study what happens in the minimal situation when one is allowed to mix two primes \cite{Kim06}.\\

\noindent\textbf{Prime Graph Question (PQ):}\quad Let $G$ be a finite group. If $\V(\ZZ G)$ contains an element of order $pq$, does $G$ also contain an element of order $pq$, where $p$ and $q$ are primes? \\

When introduced, (PQ) was immediately proven to have a positive answer for solvable groups, a class of groups for which it seems impossible to decide whether (ZC1) holds for a given group in the class. But the main advantage for the study of (PQ) is that in contrast to other questions in the field, such as (ZC1) or the Spectrum Problem, a reduction theorem is available. Namely (PQ) holds for a group $G$ if it holds for all almost simple 
quotients
of $G$ \cite{KK16}. Here an \emph{almost simple} group denotes a group $A$ which can be sandwiched between a non-abelian simple group $S$ and the automorphism group of $S$, i.e. $S \cong \operatorname{Inn}(S) \leqslant A \leqslant \Aut(S)$. In this case $S$ is called the \emph{socle} of $A$. Hence one can hope that the Classification of Finite Simple Groups can provide additional insight into (PQ) or even allow a proof.  

It turned out that the HeLP method provides a strong tool to answer (PQ) for many groups (cf. Section~\ref{sec:HeLP}), but it was insufficient to prove it for the alternating group of degree $6$ \cite{Sal07}. Still, using a special additional argument it was proved in \cite{HertweckA6}:

\begin{theorem}[Hertweck]\label{th:A6}
The Zassenhaus Conjecture holds for the alternating group of degree $6$.
\end{theorem}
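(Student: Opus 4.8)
The plan is to follow the template that these computations later crystallised into: use the character-theoretic (HeLP) constraints to cut the problem down to a few critical configurations, and then eliminate the survivors by a module-theoretic argument of lattice type. Throughout, let $u\in\V(\ZZ A_6)$ be a nontrivial torsion unit of order $n$; the goal is to show that $u$ is conjugate in $\QQ A_6$ to an element of $A_6$. By a theorem of Marciniak, Ritter, Sehgal and Weiss this is equivalent to requiring that every partial augmentation $\varepsilon_C(u^d)$ be non-negative, as $C$ runs through the conjugacy classes of $A_6$ and $d$ through the divisors of $n$; and by the Berman--Higman theorem one already has $\varepsilon_1(u^d)=0$ whenever $u^d\neq 1$. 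The element orders of $A_6$ are $1,2,3,4,5$, so by \cite{CL65} the prime-power part of $n$ lies in $\{2,4,3,5\}$; hence $n\in\{1,2,3,4,5,6,10,12,15,20,30,60\}$.

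For each admissible $n$ the unknowns are the partial augmentations of $u$ and of its proper powers, supported on the handful of classes of admissible order and normalised by $\sum_C\varepsilon_C(u^d)=1$. I would then impose the HeLP constraints: for every $\chi\in\Irr(A_6)$ (there are seven, of degrees $1,5,5,8,8,9,10$) and every $l\in\ZZ$, the multiplicity
\[ \mu_l(u,\chi)=\frac1n\sum_{d\mid n}\Tr_{\QQ(\zeta_d)/\QQ}\bigl(\chi(u^{n/d})\,\zeta_d^{-l}\bigr)\in\ZZ_{\geq 0} \]
of a fixed primitive $n$-th root of unity as an eigenvalue of a representation affording $\chi$ is a non-negative integer, where $\zeta_d$ denotes a primitive $d$-th root of unity. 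Substituting $\chi(u^{n/d})=\sum_C\varepsilon_C(u^{n/d})\chi(C)$ turns each such requirement into a linear (in)equality in the partial augmentations, and solving the resulting integer systems should eliminate outright most of the mixed orders and, for the prime-power orders $2,3,5$, force exactly the distribution of a genuine group element.

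The difficulty, and the reason HeLP alone does not suffice here \cite{Sal07}, is that this numerical procedure leaves at least one stubborn configuration: a putative unit $u$ whose order is a product $pq$ of two distinct primes, realising a non-edge of the prime graph of $A_6$ (such an edge cannot exist, since $A_6$ has no element of order $6$, $10$ or $15$). To exclude such a $u$ I would argue as follows. Writing $u=u_p u_q$ as the product of its commuting $p$-part and $q$-part, the already-settled prime-power cases show that $u_p$ and $u_q$ are each rationally conjugate to elements of $A_6$; the task is to show that these realisations can be chosen to commute inside $A_6$, for that would produce an element of order $pq$ in $A_6$ and contradict its absence.

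This simultaneous realisation is exactly what the lattice method is designed to deliver, and it is here that I expect the main work to lie. The idea is to localise at one of the primes, say $p$, and to study a suitable $\ZZ_p A_6$-lattice together with the action induced by $u$; a rigidity theorem of Weiss type for lattices over $\ZZ_p\langle u_p\rangle$ pins the $p$-part down to a genuine $p$-element of $A_6$, and tracking the commuting action of $u_q$ through the reduction forces the desired commuting pair. The principal obstacle is precisely this module-theoretic bookkeeping: selecting the right prime and lattice, and controlling its reduction against the small-dimensional modular representation theory of $A_6\cong\PSL(2,9)$, so as to contradict the survivor and thereby verify the Marciniak--Ritter--Sehgal--Weiss criterion in the one case the HeLP data cannot reach.
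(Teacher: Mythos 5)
Your reduction of the problem is set up correctly: the admissible orders, the HeLP/multiplicity constraints, and the observation that everything survives except a unit of order $pq$ (in fact order $6$, since HeLP does eliminate orders $10$ and $15$) all match the actual structure of the proof. But the heart of the theorem is the elimination of that order-$6$ unit, and here your proposal has a genuine gap. Your plan --- write $u=u_2u_3$, use the settled prime-power cases to conjugate each part rationally to a group element, and then show ``these realisations can be chosen to commute inside $A_6$'' --- is not a viable strategy. The two parts are conjugated into $A_6$ by \emph{different} units of $\QQ A_6$, and no known theorem upgrades separate rational conjugacy of commuting parts to a simultaneous (commuting) realisation in the group; a statement of that kind would be a positive solution of the Prime Graph Question for this configuration, i.e.\ it is exactly what has to be proved, not a tool one can invoke. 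Weiss's rigidity theorem, which you appeal to, concerns $p$-adic permutation lattices for $p$-subgroups and plays no role in Hertweck's argument; ``tracking the commuting action of $u_q$ through the reduction'' is not an argument that can be carried out.

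What the proof actually does is quite different and more concrete. HeLP leaves precisely two distributions, $(\varepsilon_{2a}(u),\varepsilon_{3a}(u),\varepsilon_{3b}(u))\in\{(-2,2,1),(-2,1,2)\}$, swapped by the outer automorphism, so one may assume the first. One computes the eigenvalues of $u$ under the two inequivalent $5$-dimensional rational representations $D_{5a}$, $D_{5b}$: they are $(1,\zeta,\zeta^2,-\zeta,-\zeta^2)$ and $(1,\zeta,\zeta^2,-1,-1)$ respectively, with $\zeta$ a primitive cube root of unity. Working over $R=\ZZ_{(3)}$ with residue field $k=\FF_3$, the reductions of the corresponding lattices $L_{5a}$, $L_{5b}$ share a common $4$-dimensional composition factor $\overline{L}$. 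Splitting each lattice by the idempotent $e=\frac{1+u^3}{2}$, the $(1-e)$-component coming from $L_{5b}$ has eigenvalues $(-1,-1)$, forcing $\overline{L}(1-\overline{e})$ to be a direct sum of two $1$-dimensional $k\langle\overline{u}\rangle$-modules; but the $(1-e)$-component coming from $L_{5a}$ has eigenvalues $(-\zeta,-\zeta^2)$, hence is indecomposable over $R\langle u\rangle$ (no primitive cube root of unity in $R$), and by Minkowski's theorem the kernel of $\GL_2(R)\to\GL_2(k)$ contains no element of order $3$, so its reduction --- and therefore $\overline{L}(1-\overline{e})$ --- is indecomposable. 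These two conclusions about the same module contradict each other, excluding $u$. Your proposal never identifies the surviving partial augmentations, the two lattices, or their common modular constituent, and without those ingredients no contradiction is reached.
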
 

It was realized by the authors that in fact Hertweck's argument could be generalized to provide a new method to study units in $\ZZ G$, the so-called lattice method \cite{BM17}. This method was then further developed and provided results for several new series' of groups. Thus each of the single groups studied by Luthar-Passi and Hertweck, respectively, gave rise to ideas which continue to deeply influence the study of units in group rings, emphasizing the importance of studying even single examples.

The aim of this article is to present the HeLP and lattice methods and some results achieved using them. In Sections~\ref{sec:HeLP} and \ref{sec:Lattice} we review the HeLP and the lattice method, respectively. Finally in Section~\ref{sec:A7} we give an example of the application of the methods to show how much can be achieved using them for torsion units in the integral group rings of the alternating group of degree 7. We also exhibit several problems which remain open in the subject, in particular some of which could be attacked using the methods described.

\section{HeLP}\label{sec:HeLP}
We will assume throughout that $G$ is a finite group. In general with regard to the order of units in $\ZZ G$ the following classical result, mentioned in the introduction, is available.

\begin{theorem}\label{th:CL} \cite{CL65}
The exponents of $G$ and $\V(\ZZ G)$ coincide.
\end{theorem}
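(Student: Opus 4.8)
The plan is to prove the two divisibilities $\exp(G)\mid\exp(\V(\ZZ G))$ and $\exp(\V(\ZZ G))\mid\exp(G)$ separately, where by $\exp(\V(\ZZ G))$ I mean the least common multiple of the orders of the torsion units of $\V(\ZZ G)$ (the group itself has elements of infinite order). The only substantial input is the prime-power statement from \cite{CL65} already recalled in the introduction: if $\V(\ZZ G)$ contains an element of order $p^{n}$ for a prime $p$, then so does $G$.

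The first divisibility is immediate. Each $g\in G$ satisfies $\varepsilon(g)=1$, so $G$ embeds into $\V(\ZZ G)$ as the subgroup of trivial normalized units. Hence every order occurring in $G$ also occurs among the torsion units of $\V(\ZZ G)$, and passing to least common multiples gives $\exp(G)\mid\exp(\V(\ZZ G))$.

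For the reverse divisibility I would argue one prime at a time. Let $u\in\V(\ZZ G)$ be a torsion unit of order $m$ and fix a prime $p$ with $p^{a}$ the exact power of $p$ dividing $m$. Then $u^{m/p^{a}}$ is again a normalized torsion unit, now of order exactly $p^{a}$, so the cited prime-power result forces $G$ to contain an element of order $p^{a}$; in particular $p^{a}\mid\exp(G)$. Letting $p$ range over all primes dividing $m$ and using that these prime powers are pairwise coprime yields $m\mid\exp(G)$. As $u$ was arbitrary, $\exp(\V(\ZZ G))\mid\exp(G)$, and combining the two divisibilities gives the claimed equality.

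In this reduction all the depth is carried by the prime-power result of \cite{CL65}, and that is where I expect the genuine obstacle to lie. One should resist the temptation to prove it by a naive reduction modulo $p$: passing to $\U(\FF_{p}G)$ can create $p$-elements that do not come from $G$ at all (for instance a matrix block $M_{2}(\FF_{p})$ of $\FF_{p}G$ already contributes unipotent units of order $p$, while $G$ may have no element of order $p$). Thus it is precisely the integrality of $u$, and not merely its image in characteristic $p$, that must be exploited, and making this exploitation effective is the content of the theorem of Cohn and Livingstone, which I would invoke rather than reprove.
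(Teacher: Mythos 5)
Your proof is correct and is in effect the same as the paper's: the paper offers no argument for Theorem~\ref{th:CL} beyond the citation of \cite{CL65}, and your two divisibilities (embedding $G$ as trivial units for $\exp(G)\mid\exp(\V(\ZZ G))$, and the prime-power statement of \cite{CL65} quoted in the paper's introduction, applied to $u^{m/p^{a}}$, for the converse) are exactly the routine bridge from that cited result to the stated equality, with the hard content left in \cite{CL65} in both cases. Note also that you tacitly corrected the paper's slip of defining the exponent of $\V(\ZZ G)$ as the \emph{greatest common divisor} of the orders of its torsion elements; it is of course their least common multiple, as you use.
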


Here the exponent of the infinite group $\V(\ZZ G)$ means the greatest common divisor of the orders of elements of finite order in $X$. Hence if we study questions such as (ZC1) and (PQ), we only have to consider units in $\V(\ZZ G)$ whose order divide the exponent of $G$. 

An important notion in the study of group rings are partial augmentations. Namely for a conjugacy class $C$  in $G$ and $u = \sum_{g \in G} z_g g$ an element in $R G$ set
$$\varepsilon_C(u) = \sum_{g \in C} z_g. $$
This is known as the \emph{partial augmentation} of $u$ at $C$. For (ZC1) we are interested whether a normalized torsion unit $u$ is \emph{rationally conjugate} to a group element $g \in G$, i.e.\ whether there is $x \in \U(\QQ G)$ such that $x^{-1}ux = g$. The significance of partial augmentations for this objective was realized in \cite{MRSW87} and \cite{LP89}.

\begin{theorem}\label{th:MRSW}\cite[Theorem 2.5]{MRSW87}
A unit $u \in \V(\ZZ G)$ of order $n$ is conjugate in $\QQ G$ to a trivial unit if and only if $\varepsilon_C(u^d) \geqslant 0$ for all conjugacy classes $C$ in $G$ and all divisors $d$ of $n$.
\end{theorem}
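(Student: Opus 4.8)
The plan is to prove both implications, relying on two standard facts that I would record first. For $u=\sum_{g\in G} z_g g\in\V(\ZZ G)$ each partial augmentation $\varepsilon_C(u)=\sum_{g\in C} z_g$ is an \emph{integer}, and for every $\chi\in\Irr(G)$ one has, fixing a representative $x_C\in C$,
\[
\chi(u)=\sum_C \varepsilon_C(u)\,\chi(x_C).
\]
As the character table $(\chi(x_C))_{\chi,C}$ is an invertible square matrix, the vectors $(\varepsilon_C(u))_C$ and $(\chi(u))_\chi$ determine one another, and since character values are traces, both are invariant under conjugation by units of $\QQ G$. The second fact is that two torsion units $u,v$ are conjugate in $\QQ G$ if and only if $\chi(u^d)=\chi(v^d)$ for all $\chi\in\Irr(G)$ and all $d$: in each Wedderburn component a finite-order element acts semisimply, the numbers $\chi(u^d)$ are the power sums of its eigenvalues and hence determine their multiplicities, which pin down the conjugacy class in that component.

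Granting these, the ``only if'' direction is immediate. A trivial normalized torsion unit is a group element $g$, and if $u$ is $\QQ G$-conjugate to $g$ then $u^d$ is $\QQ G$-conjugate to $g^d\in G$ for every $d\mid n$; thus $\varepsilon_C(u^d)$ equals $1$ if $C$ is the class of $g^d$ and $0$ otherwise, in particular it is non-negative.

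For the converse I would proceed as follows. Fix $d\mid n$. Since $\varepsilon$ is a homomorphism, $\sum_C \varepsilon_C(u^d)=\varepsilon(u^d)=1$, while by integrality and the hypothesis the $\varepsilon_C(u^d)$ are non-negative integers; hence there is a unique class $C_d$ with $\varepsilon_{C_d}(u^d)=1$ and $\varepsilon_C(u^d)=0$ for $C\neq C_d$. Setting $g:=x_{C_1}$, this already gives $\chi(u)=\chi(g)$ for all $\chi$. By the criterion above, to conclude $u\sim g$ in $\QQ G$ it remains to show $\chi(u^d)=\chi(g^d)$, equivalently $\varepsilon_C(u^d)=\varepsilon_C(g^d)$ for all $C$, for every $d\mid n$; by the concentration just proved this says exactly that $C_d$ is the conjugacy class of $g^d$ for each $d$.

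This coherence of the supports $C_d$ across all powers is the step I expect to be the main obstacle. The information at $d=1$ records only a trace and does not determine the spectrum of $u$ in a given representation, and knowing the power sums $\chi(u^d)$ merely along divisors $d\mid n$ is in general insufficient to recover the eigenvalue multiplicities. The additional leverage is the non-negativity and integrality of the multiplicities
\[
\mu(\xi,\chi)=\frac1n\sum_{d=0}^{n-1}\chi(u^d)\,\xi^{-d}\in\ZZ_{\ge 0},
\]
ranging over $\chi\in\Irr(G)$ and $n$-th roots of unity $\xi$, which is precisely where the hypothesis on \emph{all} powers of $u$ is consumed. I would therefore isolate as the key lemma the assertion that these constraints force $u$ to have the spectrum of a single group element, prove the concentration and the reduction above by the elementary arguments indicated, and close the coherence gap with the deeper representation-theoretic input behind the theorem -- the rigidity of $p$-adic torsion units on $\ZZ_p$-lattices in the spirit of Weiss.
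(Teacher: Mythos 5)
Your setup is correct as far as it goes, and it follows the same skeleton as the standard proof of this result (the paper itself gives no proof, citing \cite{MRSW87}): partial augmentations and character values determine each other through the character table, rational conjugacy of torsion units is detected by the values $\chi(u^j)$ for all $j$ and all $\chi \in \Irr(G)$, the ``only if'' direction is immediate, and non-negativity plus integrality plus $\varepsilon(u^d)=1$ forces concentration, i.e.\ a unique class $C_d$ with $\varepsilon_{C_d}(u^d)=1$. You also correctly isolate the remaining step: that $C_d$ must be the class of $g^d$ for $g=x_{C_1}$. But you then leave this step unproved, and neither source of leverage you point to can close it. The non-negativity and integrality of the multiplicities $\mu(\xi,u,\chi)=\frac{1}{n}\sum_{j}\chi(u^j)\xi^{-j}$ is \emph{automatic} for an honest torsion unit $u$ --- its image under a representation is a diagonalizable matrix of finite order --- so it imposes no condition whatsoever beyond what you already have; an ``incoherent'' assignment of classes $C_d$ would still produce perfectly legitimate non-negative integer multiplicities, so this cannot distinguish the two situations. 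And Weiss's rigidity of $p$-adic torsion units on $\ZZ_p$-lattices is both logically unrelated to this theorem and far deeper than anything it needs; no actual argument is offered there, only a pointer.

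The ingredient that actually closes the gap is elementary but arithmetical: the power-map congruence for partial augmentations, recorded in this paper as Proposition~\ref{prop:partaugs}(iii). For a prime $p$ dividing $n$ and any class $D$ one has $\sum_{C,\ C^{p}\subseteq D}\varepsilon_C(u)\equiv\varepsilon_D(u^{p}) \pmod p$. Under concentration the left-hand side is $1$ exactly when $D$ is the class of $g^{p}$ and $0$ otherwise, while the right-hand side is $1$ exactly when $D=C_{p}$ and $0$ otherwise; since $1\not\equiv 0 \pmod p$, the class of $g^{p}$ must be $C_{p}$. Applying the same congruence to the units $u^{d}$ (which again satisfy the hypothesis) and inducting along the divisor lattice of $n$ yields that $C_{d}$ is the class of $g^{d}$ for every $d\mid n$; in particular $g^{n}=1$, and then the Galois action on character values ($\chi(u^{dk})=\sigma_k(\chi(u^{d}))$ for $k$ coprime to $n/d$, and likewise for $g$) extends $\chi(u^{d})=\chi(g^{d})$ from divisors $d$ to all exponents $j$, at which point your conjugacy criterion applies (together with the standard descent from conjugacy over $\mathbb{C}$ to conjugacy over $\QQ$, which you also gloss over but which is routine). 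Note that this congruence is a Frobenius-type statement whose proof uses that $\varepsilon_C$ vanishes on additive commutators and that the coefficients of $u$ are rational integers --- it is precisely the place where the ring structure of $\ZZ G$, and not just character data of $u$ and its powers separately, enters the argument. Without it, or some equivalent linkage between $u$ and $u^{p}$, the proof does not go through.
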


Hence it can be decided solely from the knowledge of the partial augmentations of $u^d$, $d$  running through the divisors of the order of a normalized torsion unit $u$, whether it is rationally conjugate to an element of $G$. A distribution $(\varepsilon_C(u^d))_{C,d}$ ($C$ running through the conjugacy classes of $G$ and $d$ through the divisors of $n$) of partial augmentations for an element $u$ of order $n$ is called \emph{trivial} if it coincides with the distribution of partial augmentations of an element of $G$. To keep the notation short, one frequently omits those partial augmentations $\varepsilon_C(u^d)$ that are zero or at least those that are a priori known to be zero, cf.\ Proposition~\ref{prop:partaugs}.

Now let $D: G \rightarrow \operatorname{GL}_n(K)$ be a representation of $G$ over a field $K$ of characteristic $0$ with character $\chi$. Then $D$ can be linearly extended to a ring homomorphism between $\ZZ G$ and the $n\times n$-matrix ring over $K$ and then restricted to $\U(\ZZ G)$ providing a group homomorphism $\U(\ZZ G) \rightarrow \operatorname{GL}_n(K)$. Then also $\chi$ extends to a character of $\U(\ZZ G)$ and for $u = \sum_{g \in G} z_g g \in \U(\ZZ G)$ we compute, using the fact that characters are class functions,
\begin{align}\label{eq:character}
\chi(u) = \sum_{g\in G} z_g\chi(g) = \sum_C \varepsilon_C(u) \chi(C) 
\end{align}
where the last sum runs over all conjugacy classes $C$ of $G$. Denote by $\chi|_U$ the restriction of $\chi$ to a subgroup $U$ of $\U(\ZZ G)$ and let $\xi$ be a character of $\langle u \rangle$ for $u \in \U(\ZZ G)$ of order $n$. Applying the scalar product of characters we obtain that
\begin{align}\label{eq:scalarprod}
\langle \chi|_{\langle u \rangle}, \xi \rangle =  \frac{1}{n} \sum_{i=1}^n \chi(u^i)\overline{\xi(u^i)} = \frac{1}{n} \sum_{i=1}^n \left(\sum_C \varepsilon_C(u^i)\chi(C) \right) \overline{\xi(u^i)} 
\end{align}
is a non-negative integer. Clearly, if $\xi$ has degree $1$, the number $\langle \chi|_{\langle u \rangle}, \xi \rangle$ is just the multiplicity of $\xi(u)$ as an eigenvalue of $D(u)$ and this point of view is often used when applying the HeLP method. We will denote the multiplicity of an $n$-th root of unity $\zeta$ as an eigenvalue of $D(u)$ as $\mu(\zeta, u, \chi)$.

Denote by $ \Tr_{E/F}: E \rightarrow F$ the number theoretical trace of the field extension $E/F$, i.e.\ $\Tr_{E/F}(e) = \sum_{\sigma \in \Aut(E/F)} \sigma(e)$. Then, by discrete Fourier transformation and the fact that multiplicites of eigenvalues are non-negative integers, we obtain the main formula of \cite{LP89} from \eqref{eq:scalarprod}.

\begin{theorem}\label{th:LP} \cite[Theorem 1]{LP89}
Let $u \in \V(\ZZ G)$ be of order $n$ and let $\zeta$ be a primitive complex $n$-th root of unity. Then for every complex character $\chi$ of $G$ and every integer $\ell$ the expression
$$\frac{1}{n} \sum_{d \mid n} \Tr_{\QQ(\zeta^d)/\QQ}\left(\chi(u^d)\zeta^{-d\ell} \right) = \frac{1}{n} \sum_{d \mid n} \Tr_{\QQ(\zeta^d)/\QQ}\left(\left(\sum_C \varepsilon_C(u^d) \chi(C) \right) \zeta^{-d\ell} \right) $$
is a non-negative integer.
\end{theorem}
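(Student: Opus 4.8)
The equality of the two displayed expressions is immediate from \eqref{eq:character} applied to each power $u^d$, so the entire content of the statement is that this rational number is a non-negative integer. The plan is to identify it with the multiplicity $\mu(\zeta^\ell, u, \chi)$ of $\zeta^\ell$ as an eigenvalue of $D(u)$, where $D$ is a representation affording $\chi$. Since $u^n = 1$ and $K$ has characteristic $0$, Maschke's theorem shows that the restriction of $D$ to the cyclic group $\langle u \rangle$ is diagonalizable, so $D(u)$ is conjugate to a diagonal matrix whose entries are $n$-th roots of unity. Writing $\mu_k = \mu(\zeta^k, u, \chi)$ for the multiplicity of $\zeta^k$ among these entries, we have $\chi(u^i) = \sum_{k=0}^{n-1}\mu_k \zeta^{ki}$ for every $i$.

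First I would record the elementary Fourier inversion: multiplying $\chi(u^i)$ by $\zeta^{-i\ell}$ and averaging over $i$ isolates the coefficient $\mu_\ell$, that is,
\[
\frac{1}{n}\sum_{i=0}^{n-1}\chi(u^i)\zeta^{-i\ell} = \mu_\ell.
\]
This is exactly the scalar product $\langle \chi|_{\langle u \rangle}, \xi\rangle$ from \eqref{eq:scalarprod} for the linear character $\xi$ of $\langle u\rangle$ determined by $\xi(u) = \zeta^\ell$ (note that $\sum_{i=1}^n$ and $\sum_{i=0}^{n-1}$ agree by periodicity), and hence is a non-negative integer. It thus remains only to match the averaged Galois-trace sum of the theorem with this inversion formula.

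The main step is therefore the reorganization of $\sum_{i=0}^{n-1}\chi(u^i)\zeta^{-i\ell}$ according to the Galois action. I would partition the indices by $d := \gcd(i,n)$, which runs over the divisors of $n$; for fixed $d$ one writes $i = d a$ with $a$ ranging over a set of representatives of $(\ZZ/(n/d)\ZZ)^{\times}$, so that $\zeta^d$ is a primitive $(n/d)$-th root of unity and $\sigma_a \colon \zeta^d \mapsto \zeta^{da}$ describes the elements of $\Aut(\QQ(\zeta^d)/\QQ)$. The crucial identity is
\[
\sigma_a\bigl(\chi(u^d)\bigr) = \chi(u^{da}),
\]
which holds because $\chi(u^d) = \sum_j \lambda_j^d$ is a sum of the $(n/d)$-th roots of unity $\lambda_j^d$ (the $d$-th powers of the eigenvalues $\lambda_j$ of $D(u)$), and $\sigma_a$ raises each such root to its $a$-th power. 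Consequently $\Tr_{\QQ(\zeta^d)/\QQ}\bigl(\chi(u^d)\zeta^{-d\ell}\bigr) = \sum_{a} \chi(u^{da})\zeta^{-da\ell}$, and letting $a$ vary collects precisely those indices $i = da$ with $\gcd(i,n) = d$, each exactly once. Summing over $d \mid n$ then recovers $\sum_{i=0}^{n-1}\chi(u^i)\zeta^{-i\ell}$, completing the identification.

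I expect the Galois-compatibility $\sigma_a(\chi(u^d)) = \chi(u^{da})$, together with the verification that $(d,a) \mapsto da$ is a bijection onto the residues modulo $n$, to be the only genuinely delicate point; the diagonalizability and the Fourier inversion are routine, and once the trace sum has been rewritten index by index, non-negativity and integrality come for free from the eigenvalue-multiplicity interpretation.
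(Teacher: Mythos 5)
Your proof is correct and takes essentially the same route as the paper, which obtains the formula from \eqref{eq:scalarprod} ``by discrete Fourier transformation and the fact that multiplicities of eigenvalues are non-negative integers'' --- your Fourier inversion plus Galois-trace reorganization is exactly that sketched step written out in full, with non-negativity coming from the eigenvalue-multiplicity interpretation $\mu(\zeta^\ell,u,\chi)$ just as in the paper. No gaps: the partition of residues by $\gcd(i,n)$ and the compatibility $\sigma_a(\chi(u^d))=\chi(u^{da})$ are precisely what makes the trace sum equal the full exponential sum.
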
 
We hence obtain a system of linear inequalities with unknowns $\varepsilon_C(u^d)$, where $C$ runs through the conjugacy classes of $G$ and $d$ through the divisors of $n$. If one can show that $\varepsilon_C(u^d) \geqslant 0$ for all $C$ and all $d$, then, by Theorem~\ref{th:MRSW} we know that $u$ is conjugate in $\QQ G$ to a trivial unit. This shows that knowledge on the partial augmentations of units of finite order in $\ZZ G$ is fundamental for the application of the method. We summarize some a priori knowledge on partial augmentations.

\begin{proposition}\label{prop:partaugs} Let $u \in \V(\ZZ G)$ be of order $n$.
\begin{itemize}
\item[(i)] (Berman-Higman theorem) \cite[Proposition 1.5.1]{JdR16a} If $C= \{z\}$ is a central conjugacy class in $G$ and $u\neq z$, then $\varepsilon_C(u) = 0$. In particular $\varepsilon_{\{1\}}(u) = 0$, if $u \neq 1$.
\item[(ii)] \cite[Theorem 2.3]{HertweckBrauer} If $\varepsilon_C(u) \neq 0$, then the order of elements in $C$ divides $n$.
\item[(iii)] Let $p$ be a prime, $j$ a non-negative integer and $D$ a conjugacy class in $G$. Then
$$\sum_{C, \ C^{p^j} \subseteq D} \varepsilon_C(u) \equiv \varepsilon_D(u^{p^j}) \mod p.$$
Here $C^{p^j}$ means the $p^j$-th powers of elements in $C$ and the sum runs over the conjugacy classes $C$ of $G$.
\end{itemize}
\end{proposition}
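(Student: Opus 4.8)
The three items are essentially independent classical facts, so the plan is to treat each in turn; I expect (ii) to be the real obstacle, whereas (i) and (iii) admit short self-contained arguments. For (i) I would first settle the case $C=\{1\}$ and then reduce the central case to it. Writing $u=\sum_{g\in G}z_g g$ and letting $\rho$ be the left regular representation of $\QQ G$, whose character vanishes off the identity, one has $\operatorname{tr}\rho(u)=|G|\,z_1$. Since $u^n=1$, the matrix $\rho(u)$ is diagonalizable with $|G|$ eigenvalues, each an $n$-th root of unity, so $z_1=\tfrac{1}{|G|}\operatorname{tr}\rho(u)$ is an average of $|G|$ roots of unity and hence $|z_1|\le 1$, with equality only if all these eigenvalues coincide. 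As $z_1\in\ZZ$ this forces $z_1\in\{-1,0,1\}$, and if $z_1\neq 0$ then the eigenvalues all coincide, so $\rho(u)$ is a scalar matrix, whence $u=\zeta$ for a root of unity $\zeta\in\ZZ G$ and normalization gives $\zeta=1$, i.e.\ $u=1$. For an arbitrary central $z$, apply this to the torsion unit $uz^{-1}$, whose identity coefficient is exactly $\varepsilon_{\{z\}}(u)$ and which equals $1$ precisely when $u=z$; this yields $\varepsilon_{\{z\}}(u)=0$ whenever $u\neq z$.

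For (iii), the plan is to pass to the modular group algebra $A=\FF_p G$ and exploit a Frobenius phenomenon on the cocenter $A/[A,A]$, where $[A,A]$ denotes the $\FF_p$-span of all $ab-ba$. Writing $[x]$ for the image of $x\in A$ in $A/[A,A]$, the classes $[C]$ indexed by the conjugacy classes of $G$ form a basis, and for any $v\in\ZZ G$ the coefficient of $[\bar v]$ at $[C]$ is $\overline{\varepsilon_C(v)}$ (bars denoting reduction modulo $p$). The key point is that $x\mapsto x^p$ induces an $\FF_p$-linear map on $A/[A,A]$: in characteristic $p$ the identity $(x+y)^p\equiv x^p+y^p \pmod{[A,A]}$ holds because the mixed length-$p$ words fall into free orbits of size $p$ under cyclic rotation, and rotation acts trivially on the cocenter. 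Applying this to $\bar u=\sum_g \bar z_g g$ and using $\bar z_g^{\,p}=\bar z_g$ gives $[\bar u^{\,p}]=\sum_g \bar z_g\,[g^p]$; since $g^p$ lies in the single class $C^p$ for $g\in C$, the coefficient at a class $D$ equals $\sum_{C:\,C^p\subseteq D}\overline{\varepsilon_C(u)}$, and comparison with $\overline{\varepsilon_D(u^p)}$ proves the case $j=1$. The general case follows by iterating $u^{p^{j}}=(u^{p^{j-1}})^p$, the class relation $C^{p^{j}}\subseteq D$ factoring consistently through the intermediate class $C^{p^{j-1}}$.

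Part (ii) is where the genuine difficulty lies, and here I would follow Hertweck's Brauer-character method rather than search for an elementary argument. It suffices to fix a prime $p$ and show that the $p$-part of the order of any class $C$ with $\varepsilon_C(u)\neq 0$ does not exceed the $p$-part of $n$. The congruences in (iii) only yield information modulo $p$ and so cannot by themselves force the exact vanishing required; this is precisely why a finer invariant is needed. The crux is an extension of \eqref{eq:character} to $p$-modular Brauer characters: one shows that a Brauer character $\varphi$ can be evaluated on the $p$-regular part $u_{p'}$ of the torsion unit $u$ and satisfies $\varphi(u_{p'})=\sum_{C}\varepsilon_C(u)\,\varphi(C)$, where $\varphi$ is extended to all classes through their $p$-regular parts. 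Feeding this into the integrality of the eigenvalue multiplicities of $u_{p'}$ — the Brauer-character analogue of the non-negativity used in Theorem~\ref{th:LP} — then constrains the partial augmentations enough to eliminate any class whose order fails to divide $n$. Setting up the evaluation of Brauer characters on torsion units and establishing this formula is the technical heart of the matter, and I would expect it, rather than the bookkeeping behind (i) and (iii), to absorb the bulk of the effort.
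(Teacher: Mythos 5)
Your parts (i) and (iii) are correct and complete, and they coincide with the proofs the paper points to: note that the paper itself proves nothing here, the proposition being a compilation of cited results. Your regular-representation averaging argument for (i) is the standard Berman--Higman proof of \cite[Proposition 1.5.1]{JdR16a} (including the reduction of the central case to the identity case via $uz^{-1}$), and your argument for (iii) -- additivity of $x \mapsto x^p$ modulo $[\FF_pG,\FF_pG]$ via rotation orbits of mixed words, Fermat on the coefficients, and induction on $j$ through the intermediate classes -- is precisely the proof of \cite[Proposition 3.1]{HeLPPaper}.

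The genuine gap is part (ii), and it is not just deferred bookkeeping: the route you sketch cannot work. Your crux is the formula $\varphi(u_{p'})=\sum_C \varepsilon_C(u)\varphi(C)$ with a $p$-Brauer character $\varphi$ extended to all classes through $p$-regular parts, fed into integrality of eigenvalue multiplicities. But there is a dichotomy. If instead you take Hertweck's theorem in the form actually used in the HeLP method (sums running over $p$-regular classes only, as described after Theorem~\ref{th:LP} in the paper), then the vanishing asserted in (ii) is built into that statement and your argument is circular; in \cite{HertweckBrauer} the vanishing result (Theorem 2.3 there) comes first and the Brauer character formula is established afterwards. If, on the other hand, you use the extended form you wrote, then the coefficient of $\varepsilon_C(u)$ in every constraint is $\varphi(C_{p'})$, which depends only on the $p$-section of $C$: every linear condition you can ever extract this way involves the unknowns only through the section sums $\sum_{C:\, C_{p'}\subseteq D}\varepsilon_C(u)$. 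Such constraints are structurally incapable of separating $\varepsilon_C(u)$ from $\varepsilon_{C_{p'}}(u)$ for a $p$-singular class $C$, yet exactly this separation is the content of (ii). The remaining tools at your disposal, the ordinary-character inequalities of Theorem~\ref{th:LP}, are the classical Luthar--Passi constraints, and you give no argument that they force the vanishing; they do not in general, which is why (ii) is treated throughout the paper as independent a priori input to HeLP (it is what licenses restricting the unknowns to classes of elements of order dividing $n$ in the first place).

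What a proof of (ii) actually requires is exact, characteristic-zero information of $p$-adic nature, not congruences modulo $p$ of the kind your part (iii) produces. For instance, in the key case where the order of $u$ is prime to $p$, one can work over the $p$-adic integers (suitably extended), use the idempotents of $\ZZ_p\langle u\rangle$ to split $\ZZ_pG$ into direct summands that are projective $\ZZ_pG$-lattices, and invoke the fact that characters of projective $\ZZ_pG$-lattices vanish on $p$-singular classes; the mixed-order case needs a further refinement of this. Since this is the technical heart and lies well beyond your sketch, for (ii) you should either cite \cite[Theorem 2.3]{HertweckBrauer} as the paper does, or supply a $p$-adic argument of the above type.
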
 

The last part of Proposition~\ref{prop:partaugs} is regarded as ``well-known'', but was often overlooked. See \cite[Proposition 3.1]{HeLPPaper} for a proof.

Though only a weaker version of Proposition~\ref{prop:partaugs} was available to I. S. Luthar and I. B. S. Passi, they were able to use this together with Theorem~\ref{th:LP} to prove (ZC1) for $A_5$, the alternating group of degree $5$, by going through the divisors of the exponent of $A_5$ and showing that the distributions of partial augmentations for normalized torsion units of the corresponding order are trivial, or that there is no normalized torsion unit at all, in case there is no element of that order in $A_5$. 
From the formulation of Theorem~\ref{th:LP} it is clear that the argument of Luthar and Passi can be applied for any group $G$ to obtain restrictions on the partial augmentations of units of finite order in $\V(\ZZ G)$. If the restrictions are strong enough, one obtains a solution of (ZC1) for $G$, but otherwise one still has knowledge on potential candidates to counterexamples to (ZC1) which can be further investigated using other arguments. One such special argument to prove (ZC1) for the symmetric group $S_5$ was used in \cite{LT91}. 

If now one has knowledge on generic characters of a series of groups, then one can try to apply Theorem~\ref{th:LP} simultaneously to all these groups. The first to follow this approach was R. Wagner in his Master thesis \cite{Wag95} (cf.\ also \cite[Section 6]{BHK04}). He showed for $G = \PSL(2,p^f)$, where $f \leqslant 2$ and $\PSL(n,q)$ denotes the projective special linear group of dimension $n$ over a field with $q$ elements, that if the order of $u \in \V(\ZZ G)$ is divisible by $p$, then $u$ is in fact of order $p$ and conjugate in $\QQ G$ to a trivial unit. Relying on Theorem~\ref{th:LP}, C. H{\"o}fert then showed that (ZC1) holds for all groups of order smaller than 71 \cite{BHK04}, using a special argument for a central extension of $S_4$ of order $48$.

For a long time $A_5$ remained the only simple group for which (ZC1) was known. This only changed when Hertweck proved (ZC1) for $\PSL(2,7)$ \cite[Example 3.6]{Her06}, again relying on Theorem~\ref{th:LP}, but now also on the full strength of Proposition~\ref{prop:partaugs}, and -- again -- a special argument. He showed however that in this case the special argument was in fact reflecting a more general approach to the arguments of Luthar and Passi. Namely, he showed in \cite{HertweckBrauer} that it is possible to also use $p$-Brauer characters $\chi$ in \eqref{eq:character}, \eqref{eq:scalarprod} and Theorem~\ref{th:LP} if they are applied for a unit $u$ of order not divisible by $p$ and the sums are understood to run only on conjugacy classes containing $p$-regular elements, i.e.\ elements of order not divisible by $p$. From this extension the name HeLP originated.

Note that Hertweck's generalization is only useful, compared to the classical version, if there is a $p$-Brauer character which does not coincide with an ordinary character on all $p$-regular classes. Such $p$-Brauer characters do not exist e.g.\ if $G$ is $p$-solvable \cite[Theorem 22.1]{CR81}. But in particular for simple groups it provides a huge improvement and it was immediately used in \cite{HertweckBrauer} to (re)prove (ZC1) for $S_5$, $\PSL(2,7)$, $\PSL(2,11)$ and $\PSL(2,13)$ and to prove (PQ) for $\PSL(2,p)$ for any prime $p$. The potential of the method to study (PQ) algorithmically was then realized by V. Bovdi and A. Konovalov who used it to prove (PQ) for 13 sporadic simple groups in a series of papers, cf.\ e.g.\ \cite{BKL08}. It was also used by other authors to prove (ZC1) or (PQ) for single groups, e.g.\ in \cite{Gil13} or \cite{KK16}. Other successful applications were given in \cite{4primaryI} proving (PQ) for almost simple group with socle isomorphic to $\PSL(2,p^f)$ for $f \leqslant 2$ or to prove (ZC1) for $\PSL(2,p)$ if $p$ is a Fermat or Mersenne prime \cite{MdRS19} and for $\SL(2,p^f)$ for any prime $p$ and $f \leqslant 2$ \cite{dRS19}, the first infinite series of almost simple groups for which (ZC1) was proved.

The method was also frequently applied to the study of (PQ), sometimes supplemented by the lattice method, see Section~\ref{sec:Lattice}. We will first comment on some more aspects and variations of the method. 

\subsection{Implementations}
While first applications of the HeLP method used manual calculations, the closed formula of Theorem~\ref{th:LP} demonstrates that it is also very accessible to computer calculations using a solver of integral linear inequalities. This is in particular unavoidable if one wants to study many groups at the same time, e.g.\ all groups of small order, or groups where the character values are rather big and not feasible for manual calculations, such as sporadic simple groups.

This led several researchers to implement the HeLP method as a program which takes a group $G$ and its character tables as input and produces the restrictions on the partial augmentations of units of finite order in $\V(\ZZ G)$ which one gets from Theorem~\ref{th:LP}. To our knowledge such programs were written by C. H\"ofert \cite{BHK04}, V. Bovdi and A. Konovalov, cf.\ e.g.\ \cite{BKL08}, A.~Herman and G.~Singh \cite{HS15}.  Finally, the authors \cite{HeLPPaper} developed a \textsf{GAP 4} package \cite{GAP} devoted to the method\footnote{This package is free and open software, comes with recent \textsf{GAP} installations and is also available from \url{https://gap-packages.github.io/HeLP/}.  Note that some of the external software might need compilation, see the manual.}. It can be used in particular by researchers not in possession of their own implementation to verify results achieved by the method. In Sections~\ref{sec:Lattice} and \ref{sec:A7} we will give examples on how to use this package.

It should be remarked that no implementation can be seen as ``final''. On one hand improvement on the theoretical knowledge about partial augmentations of units of finite order, such as collected in Proposition~\ref{prop:partaugs}, can lead to new restrictions which have to be taken into account. On the other hand, when one is interested in applying the method to groups with ``big'' character values, e.g. the Monster group, one needs very powerful solvers of integral inequalities. The package written by the authors uses the solvers \textsf{4ti2} \cite{4ti2} and \textsf{normaliz} \cite{Normaliz}. These are efficient solvers, but other solvers might be better adjusted for special systems of linear inequalities.

\subsection{A non-cyclic variation}
Of course, it is not only of interest to study units of finite order in $\V(\ZZ G)$, or in other words finite cyclic subgroups of $\V(\ZZ G)$, but also arbitrary finite subgroups of $\V(\ZZ G)$. The second and third Zassenhaus Conjecture, which were intensively studied, made predictions about conjugation of such subgroups in the corresponding rational group algebra. Various special types of such subgroups were considered in the last decades, such as group bases, i.e.\ finite subgroups $H$ of $\V(\ZZ G)$ such that $\ZZ G = \ZZ H$, $p$-subgroups of $\V(\ZZ G)$ or composition factors of finite subgroups in $\V(\ZZ G)$. We will not go into detail on all these topics, but only mention where variations of the HeLP method played a role in their study. We refer the interested reader to \cite{MdRSurvey} for a survey touching upon those topics, to Hertweck's solution of the Isomorphism Problem for background on the case of group bases \cite{Her01} and to \cite{KM17} for a survey on $p$-subgroups. 

Looking at equation \eqref{eq:scalarprod} one directly sees that in fact $\langle u \rangle$ can be replaced in this equation by any finite subgroup $U$ of $\V(\ZZ G)$ on the left-hand side, adjusting the right-hand side accordingly, i.e.\ summing over all the elements of $U$ instead of the distinct powers of $u$. Hence one again obtains linear inequalities involving the partial augmentations of elements in $U$ as unknowns. It is not hard to see that also Hertweck's variation using Brauer characters can be generalized to non-cyclic groups $U$ \cite[Lemma 2.2]{Mar17}.

This idea was applied for the first time in the PhD thesis of H\"ofert \cite{HoefertDoktor} to study subgroups and composition factors of subgroups in $\V(\ZZ G)$ mainly for $G = \PSL(2,q)$, see \cite{HHK09} for the main results. It was shown in particular that $2$-subgroups of $\V(\ZZ \PSL(2,q))$ are isomorphic to subgroups of $\PSL(2,q)$, composition factors of finite subgroups of $\V(\ZZ \PSL(2,q))$ are isomorphic to subgroups of $\PSL(2,q)$ and subgroups of $\V(\ZZ G)$ of order smaller than $G$ are solvable if $G$ is a minimal simple group. It was then also applied in \cite{BK11, BM15} to obtain information on $p$-subgroups in $\V(\ZZ G)$ where $G$ is a minimal simple group or when $G = \PSL(2,q)$ and $p$ is the defining characteristic of $\PSL(2,q)$.

As mentioned above, if $\V(\ZZ G)$ contains a subgroup isomorphic to $C_{p^n}$, for some prime $p$, then also $G$ contains a subgroup isomorphic to $C_{p^n}$ and this holds for any group $G$ \cite{CL65}. The question for which other groups this happens was posed in \cite[Problem~19]{JMNK07}. \\

\noindent\textbf{Subgroup Isomorphism Problem (SIP):}\quad For which groups $U$ does the following statement hold for any finite group $G$? If $U$ is isomorphic to a subgroup of $\V(\ZZ G)$, then $U$ is isomorphic to a subgroup of $G$.\\

Only very little is known regarding (SIP). If $\ZZ G \cong \ZZ H$ for non-isomorphic groups $G$ and $H$, then (SIP) has a negative answer for $G$ and $H$. Hence Hertweck's counterexamples to the isomorphism problem \cite{Her01} are examples of groups where (SIP) has a negative answer. They remain the minimal groups known for which (SIP) has a negative answer. On the other hand, a positive answer has only been obtained for $C_{p^n}$, $C_p \times C_p$, for $p$ a prime, and $C_4 \times C_2$. The group $C_2 \times C_2$ was handled by Kimmerle \cite{JMNK07} and is based on the Brauer-Suzuki theorem, hence relying on a deep group-theoretical result. In contrast to this, the case of $C_p \times C_p$ for odd primes $p$ is a direct application of the non-cyclic HeLP method \cite{HertweckCpCp}. The case of $C_4 \times C_2$ uses classical group-theoretical results to reduce the question to certain series' of groups which are then studied using the non-cyclic HeLP method \cite{Mar17}. It seems plausible that much more could be said using the method, in particular if one weakens (SIP) to only include solvable groups $G$. The following case is of interest for (SIP).

\begin{problem} Does (SIP) hold for $C_p \times C_p \times C_p$? Does it hold for $S_3$, $Q_8$, $D_8$ or $A_5$? \end{problem}

One can not hope that the non-cyclic HeLP method alone will solve this problem, as demonstrated in \cite[Proposition 4.3]{KM17}. But the following variations might be more approachable.\\

\noindent\textbf{Problem~\thetheorem.1.}\ Does (SIP) hold for $C_p \times C_p \times C_p$, if one assumes that the group base $G$ is solvable?\\

\noindent\textbf{Problem~\thetheorem.2.}\  Does (SIP) hold for $C_p \times C_p \times C_p$, if one assumes that (ZC1) is valid for $\mathbb{Z}G$ where $G$ denotes the group base?\\

Note that (PQ) is in fact the specialization of (SIP) to the group $U = C_{pq}$.

The potential of the non-cyclic HeLP method is far from having been fully exhausted. In particular composition factors of finite subgroups of $\V(\ZZ G)$ have not been studied in-depth.

\subsection{Extending coefficients}\label{subsect:extcoeff}

There is clearly no need to restrict our interest to coefficients coming from the rational integers $\mathbb{Z}$, when looking at torsion units in integral group rings. Sometimes it is more convenient or interesting to allow coefficients from a ring of algebraic integers $\mathcal{O}$ in a number field $K$. In this case there is an obvious generalization of the first Zassenhaus Conjecture:\\

 \noindent\textbf{(First) Zassenhaus Conjecture for rings of algebraic integers:}\quad  Let $G$ be a finite group and let $\mathcal{O}$ be the ring of algebraic integers in a number field $K$. Then any unit of finite order in $\U(\mathcal{O} G)$ is conjugate by a unit of $KG$ to a trivial unit of $\mathcal{O} G$. \\
 
 From now on let $\mathcal{O}$ be the ring of algebraic integers in a fixed number field $K$.  As usual we denote by $\V(\mathcal{O}G)$ the group of units of augmentation $1$ in $\mathcal{O}G$. As it is the case for normalized torsion units in $\ZZ G$, the order of any torsion element in $\V(\mathcal{O}G)$ divides the exponent of $G$. Also the vanishing results on torsion units as in Proposition~\ref{prop:partaugs} apply for elements of $\mathcal{O}G$ and the criterion in Theorem~\ref{th:MRSW} for conjugacy of torsion units in $\V(\mathcal{O}G)$ to elements of $G$ by units of $KG$ using distributions of partial augmentations holds with the obvious adaptions; see \cite[5.4]{BKS20} for references.

Now one can prove that also the values of partial augmentations of elements of $\mathcal{O}G$ are governed by $G$ (and not by $\mathcal{O}$). Namely by  \cite[Lemma~5.5]{BKS20}, for a torsion units $u \in \V(\mathcal{O}G)$ we have $\varepsilon_C(u) \in \mathcal{O} \cap \ZZ[\zeta]$, where $\zeta$ is a primitive $n$-th root of unity if the order of $u$ is $n$ (which then is a divisor of the exponent of $G$ by the above). 

The proofs of the formulas for the muliplicities of the eigenvalues of the image of a torsion unit $u \in \V(\mathcal{O}G)$ under a representation $D$ of $G$ as in Theorem~\ref{th:LP} carry over verbatim (in case $D$ is a modular representation, we require that the characteristic does not divide the order of $u$).

Assume that a potential torsion unit $u \in \V(\mathcal{O}G)$ of order $n$ is given and $\zeta$ is a primitive $n$-th complex root of unity. Choose a $\mathbb{Z}$-basis $B$ of $\mathcal{O} \cap \ZZ[\zeta]$. Then one can express each $\varepsilon_x(u) = \sum_{b \in B}\alpha_{x, b}b$ with $\alpha_{x, b} \in \ZZ$. If we assume that the character values of the proper powers of $u$ under $\chi$ are known, then we are interested in the values of $\alpha_{x,b}$.  Then one can again compute the multiplicity of powers of $\zeta$ as an eigenvalue of the image of $u$ under a representation with character $\chi$ and one gets a system of linear inequalities for the $\alpha_{x,b}$ over $\ZZ$:
\begin{equation*} 
\sum_{x^G}\sum_{b \in B}\alpha_{x, b}\Tr_{\QQ(\zeta)/\QQ}\left(\chi(x)\zeta^{-\ell}b\right) +  \sum_{1 \not= d \mid n} \Tr_{\QQ(\zeta^d)/\QQ}\left(\chi(u^d)\zeta^{-d\ell}\right) \in n\mathbb{Z}_{\geqslant 0}.  
\end{equation*}
Compared to the ``classical'' HeLP method, where $\mathcal{O} = \ZZ$, the number of variables grows by a factor which equals the degree of $K\cap\QQ(\zeta)$ over $\QQ$. This system of inequalities can now also be used to obtain restrictions on partial augmentations on torsion units in $\mathcal{O}G$ for a fixed group $G$. Since the possible orders of torsion elements in $\V(\mathcal{O}G)$ divide the exponent of $G$ and all partial augmentations of these units are contained in $\mathbb{Z}[\zeta]$, where $\zeta$ is a primitive root of unity of order the exponent of $G$, there are actually only finitely many cases to handle to prove the (first) Zassenhaus Conjecture variant for \emph{all} rings of algebraic integers. This was implemented in \textsf{GAP}. With this implementation it was for instance proven in \cite{BKS20} that this variation holds for all groups of order at most $95$ except if the group is isomorphic to $A_5$, the natural wreath product $S_3 \wr S_2$ or if it maps onto $S_4$.

The first Zassenhaus Conjecture variation for rings of algebraic integers has implications on (ZC1) through the following result \cite[Proposition~8.2]{Her08}:
        
\begin{proposition}[Hertweck]\label{prop:Hertweck_reduction} Let $G$ be a finite
  group, let $A$ be a finite abelian group of exponent $m$ and $\zeta_m$ a primitive complex $m$-th root of unity.
Suppose that any torsion element of $\U(\ZZ[\zeta_m]G)$ is conjugate in $\U(\QQ(\zeta_m)G)$ to a trivial unit of $\ZZ[\zeta_m]G$. Then (ZC1) holds for $G \times A$. \end{proposition}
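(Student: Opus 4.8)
The plan is to exploit that $A$ sits as a central subgroup of $G\times A$, so that after adjoining the $m$-th roots of unity the group algebra splits completely in the $A$-direction. For each character $\lambda$ in the character group $\widehat{A}$ of $A$ I would consider the ring homomorphism $\phi_\lambda\colon \ZZ(G\times A)\to\ZZ[\zeta_m]G$ determined by $(g,a)\mapsto\lambda(a)g$; this lands in $\ZZ[\zeta_m]G$ precisely because $A$ has exponent $m$, so that $\lambda(a)\in\langle\zeta_m\rangle$. Writing $u=\sum_{g,a}z_{g,a}(g,a)$, the conjugacy classes of $G\times A$ are the sets $C\times\{a\}$ with $C$ a class of $G$ and $a\in A$, and a direct computation gives the Fourier relation $\varepsilon_C(\phi_\lambda(u))=\sum_{a\in A}\lambda(a)\,\varepsilon_{C\times\{a\}}(u)$. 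By Theorem~\ref{th:MRSW} it suffices to show that $\varepsilon_{C\times\{a\}}(u^d)\geq 0$ for every class $C$ of $G$, every $a\in A$ and every divisor $d$ of the order $n$ of $u$; since each $u^d$ is again a normalized torsion unit and $\phi_\lambda(u^d)=\phi_\lambda(u)^d$, it is enough to run the argument for $u$ itself and repeat it verbatim for each power.

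For each $\lambda$ the image $u_\lambda:=\phi_\lambda(u)$ is a torsion unit of $\U(\ZZ[\zeta_m]G)$, so the hypothesis applies: $u_\lambda$ is conjugate in $\QQ(\zeta_m)G$ to a trivial unit $\rho_\lambda h_\lambda$ of $\ZZ[\zeta_m]G$, with $\rho_\lambda$ a root of unity and $h_\lambda\in G$. As partial augmentations are invariant under conjugation in $\QQ(\zeta_m)G$, this gives $\varepsilon_C(u_\lambda)=\rho_\lambda$ when $C$ is the class of $h_\lambda$ and $\varepsilon_C(u_\lambda)=0$ otherwise. To control the roots of unity $\rho_\lambda$ I would form $\sigma=\sum_{a\in A}\big(\sum_C\varepsilon_{C\times\{a\}}(u)\big)a\in\ZZ A$ and observe that $\lambda(\sigma)=\varepsilon(u_\lambda)=\rho_\lambda$ for every $\lambda$. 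Thus $\sigma$ maps to a root of unity under every character of $A$, so, as these characters separate the elements of $\mathbb{C}A$, some power of $\sigma$ equals $1$ and $\sigma$ is a torsion unit of $\ZZ A$ of augmentation $1$. By Higman's theorem \cite{Hig40} that such units are trivial, $\sigma=a_0$ for a single $a_0\in A$; in particular $\rho_\lambda=\lambda(a_0)$ for all $\lambda$.

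Fourier inversion over $A$ now yields $\varepsilon_{C\times\{a\}}(u)=\tfrac{1}{|A|}\sum_{\lambda}\overline{\lambda(a)}\,\varepsilon_C(u_\lambda)=\tfrac{1}{|A|}\sum_{\lambda\in T_C}\overline{\lambda(aa_0^{-1})}$, where $T_C=\{\lambda\in\widehat{A}:h_\lambda\in C\}$ and I have used $\overline{\lambda(a)}\rho_\lambda=\overline{\lambda(aa_0^{-1})}$. Evaluating at $a=a_0$ gives $\varepsilon_{C\times\{a_0\}}(u)=|T_C|/|A|$, and this is the crux: the left-hand side is a rational integer because $u$ has integer coefficients, while $0\leq|T_C|\leq|A|$, forcing $|T_C|\in\{0,|A|\}$. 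Since the sets $T_C$ partition $\widehat{A}$, exactly one class $C_0$ has $T_{C_0}=\widehat{A}$ and all others are empty; substituting back, $\varepsilon_{C\times\{a\}}(u)$ vanishes unless $C=C_0$ and $a=a_0$, where it equals $1$. This is precisely the trivial distribution of the element $(h_0,a_0)$ with $h_0\in C_0$, so all partial augmentations of $u$ are nonnegative; the same reasoning applied to each power $u^d$ together with Theorem~\ref{th:MRSW} completes the proof. The main obstacle is exactly the passage from the separately conjugate images $u_\lambda$ to one group element of $G\times A$: a priori both the classes $[h_\lambda]$ and the roots of unity $\rho_\lambda$ could depend on $\lambda$, and the essential point is that the Higman step pins down the $\rho_\lambda$ while the integrality of the partial augmentations of $u$ forces the class to be constant in $\lambda$.
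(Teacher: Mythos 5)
Your proof is correct. Note that the paper itself does not prove this proposition at all: it is quoted with a reference to Hertweck \cite[Proposition~8.2]{Her08}, so there is no in-paper argument to compare against; judged on its own merits, your argument is complete. Each step checks out: $\phi_\lambda$ is indeed a ring homomorphism into $\ZZ[\zeta_m]G$ because $A$ is central in $G\times A$ and has exponent $m$; the images $\phi_\lambda(u)$ are torsion units, so the hypothesis applies; partial augmentations are trace forms and hence invariant under conjugation over $\QQ(\zeta_m)$; the element $\sigma$ is the image of $u$ under the ring homomorphism $\ZZ(G\times A)\to \ZZ A$ collapsing $G$, so it is a normalized torsion unit of $\ZZ A$ and Higman's theorem \cite{Hig40} makes it a group element $a_0$, pinning down the scalars $\rho_\lambda=\lambda(a_0)$; Fourier inversion plus the integrality of $\varepsilon_{C\times\{a_0\}}(u)$ then forces $|T_C|\in\{0,|A|\}$, so the class $[h_\lambda]$ is independent of $\lambda$ and the distribution of partial augmentations of $u$ (and likewise of each power $u^d$) is the trivial one belonging to $(h_0,a_0)$, after which Theorem~\ref{th:MRSW} applies to $G\times A$. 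This route -- splitting $\ZZ(G\times A)=(\ZZ A)G$ along the linear characters of $A$, applying the hypothesis componentwise, and reassembling by orthogonality -- is the natural one and is essentially the strategy of Hertweck's original proof in \cite{Her08}; you have correctly identified that the genuine content lies in gluing the a priori unrelated data $(\rho_\lambda,[h_\lambda])_{\lambda\in\widehat{A}}$ into a single trivial unit, which your Higman-plus-integrality step accomplishes cleanly.
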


 This implies together with the above mentioned results that (ZC1) holds for $G \times A$, where $A$ is any finite abelian group, if $|G| \leqslant 95$ and $G$ does not map onto $S_4$ and is different from $A_5$ and $S_3 \wr S_2$, see \cite[Proposition~5.8]{BKS20}. This immediately raises the following problem:

 \begin{problem} Solve the first Zassenhaus Conjecture for rings of algebraic integers for the groups $S_4$, $A_5$ and $S_3 \wr S_2$.\end{problem}
 
The few remaining distributions that need to be ruled out can be found in \cite[Section~5]{BKS20}.
 
 In general the behavior of (ZC1) under direct products is rather poorly understood. One of the few general results that (ZC1) remains valid under direct products is the following \cite[Proposition~8.1]{Her08}. (Recall that (ZC1) holds for nilpotent groups \cite{Wei91}.)
 
\begin{proposition}[Hertweck] Let $G$ be a finite group for which (ZC1) holds, and let $\Pi$ be a nilpotent group of order coprime to the order of $G$. Then (ZC1) holds for the direct product $G \times \Pi$.
 \end{proposition}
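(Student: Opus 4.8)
The plan is to reduce a torsion unit of $\V(\ZZ(G\times\Pi))$ to its coprime primary parts, treat each part inside its own factor by the two instances of (ZC1) at our disposal, and recombine. Put $N=|G|$, $M=|\Pi|$, so $\gcd(N,M)=1$, and let $u\in\V(\ZZ(G\times\Pi))$ be a torsion unit. By Theorem~\ref{th:CL} its order $n$ divides $\exp(G\times\Pi)=\exp(G)\exp(\Pi)$, and as $\exp(G)$ and $\exp(\Pi)$ are coprime we factor $n=n_1n_2$ uniquely with $n_1\mid\exp(G)$, $n_2\mid\exp(\Pi)$, $\gcd(n_1,n_2)=1$. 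Decompose $u=u_1u_2$ into its commuting $n_1$- and $n_2$-parts, both powers of $u$. Let $\pi_G,\pi_\Pi$ be the ring homomorphisms $\ZZ(G\times\Pi)\to\ZZ G$ and $\to\ZZ\Pi$ induced by the two projections. Since $\pi_\Pi(u_1)$ is a torsion unit of $\V(\ZZ\Pi)$ whose order divides both $n_1$ and $\exp(\Pi)$, it is trivial; likewise $\pi_G(u_2)=1$. Hence $\bar u:=\pi_G(u)=\pi_G(u_1)\in\V(\ZZ G)$ has order $n_1$ and $\bar v:=\pi_\Pi(u)=\pi_\Pi(u_2)\in\V(\ZZ\Pi)$ has order $n_2$. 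By (ZC1) for $G$ there is $g\in G$ with $\bar u$ rationally conjugate to $g$, and since (ZC1) holds for the nilpotent group $\Pi$ \cite{Wei91} there is $h\in\Pi$ with $\bar v$ rationally conjugate to $h$. The element $(g,h)\in G\times\Pi$ then has order $n$, and it is the trivial unit we aim $u$ to be conjugate to.

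The engine is that each coprime part is confined, in terms of partial augmentations, to its own factor. Fix $e\mid n_1$ and a class $C\times D$ of $G\times\Pi$ with representative $(c,d)$. If $\varepsilon_{C\times D}(u_1^e)\neq0$, then Proposition~\ref{prop:partaugs}(ii) forces the order of $(c,d)$, namely $\operatorname{lcm}(|c|,|d|)$, to divide $n_1$; as $|d|$ divides $M$ and $\gcd(n_1,M)=1$, this gives $d=1$, i.e.\ $D=\{1\}$. Thus the partial augmentations of every power of $u_1$ vanish on all classes with nontrivial $\Pi$-component, whence, using the identity $\sum_D\varepsilon_{C\times D}(w)=\varepsilon_C(\pi_G(w))$,
\[
\varepsilon_{C\times\{1\}}(u_1^e)=\sum_{D}\varepsilon_{C\times D}(u_1^e)=\varepsilon_C\big(\pi_G(u_1^e)\big)=\varepsilon_C(\bar u^{\,e}).
\]
Because $\bar u$ is rationally conjugate to $g$, the right-hand side is the nonnegative, trivial distribution of $g^e$. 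So $u_1$ and all its powers have exactly the partial augmentation distribution of $(g,1)$, and Theorem~\ref{th:MRSW} yields that $u_1$ is rationally conjugate to $(g,1)$. The symmetric argument, now invoking (ZC1) for $\Pi$, shows that $u_2$ is rationally conjugate to $(1,h)$.

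It remains to recombine these two conjugacies, and here lies the main difficulty. Knowing $u_1\sim(g,1)$ and $u_2\sim(1,h)$ \emph{separately} does not yet give $u=u_1u_2\sim(g,h)$, since the two conjugating units of $\QQ(G\times\Pi)$ need not agree. The plan is to first normalize, replacing $u$ by a conjugate so that $u_1=(g,1)$ exactly; then $u_2$ is a torsion unit of order $n_2$ commuting with $(g,1)$ and satisfying $\pi_G(u_2)=1$, and the task becomes to conjugate $u_2$ to $(1,h)$ by a unit centralizing $(g,1)$. Equivalently, one must upgrade the two marginal statements to the single assertion that the joint partial augmentation distribution of $u$ itself is trivial, i.e.\ that under every representation the commuting coprime parts separate as operators of the shape ``$A\otimes I$'' and ``$I\otimes B$'' in a common basis. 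I expect this compatibility to follow by descending to the centralizer of $g$ and exploiting once more the nilpotency of $\Pi$ via Weiss' theorem, which controls the $\ZZ_p$-lattice structure of $\ZZ\Pi$ for the primes dividing $M$ and thereby pins down the action of $u_2$ on each eigenspace of $(g,1)$. Coordinating the two coprime components in this way — rather than any single one of the reductions above — is the technical heart of the argument; once it is in place, $u=u_1u_2$ is rationally conjugate to $(g,1)(1,h)=(g,h)$, a trivial unit, and (ZC1) holds for $G\times\Pi$.
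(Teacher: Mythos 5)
Your reductions are sound as far as they go: the coprime factorization $u=u_1u_2$, the vanishing of $\pi_\Pi(u_1)$ and $\pi_G(u_2)$ via Theorem~\ref{th:CL}, the support argument via Proposition~\ref{prop:partaugs}(ii) showing that all powers of $u_1$ have partial augmentations concentrated on classes $C\times\{1\}$, and the conclusion via Theorem~\ref{th:MRSW} that $u_1$ is rationally conjugate to $(g,1)$ and, symmetrically, $u_2$ to $(1,h)$. But this proves only the two marginal statements, and you say yourself that the recombination is the ``technical heart'' which you \emph{expect} to follow; that expectation is a restatement of the problem, not an argument. By Theorem~\ref{th:MRSW}, rational conjugacy of $u$ itself requires nonnegativity of the partial augmentations of $u$ and of its powers, and these \emph{joint} quantities are not functions of the partial augmentations of $u_1$ and $u_2$ separately: for a representation $D$, the eigenvalue multiplicities of the commuting matrices $D(u_1)$ and $D(u_2)$ do not determine those of $D(u)=D(u_1)D(u_2)$ -- one needs the multiplicities of the joint eigenspaces, which is exactly the missing information. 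Your proposed repair (normalize $u_1=(g,1)$, then conjugate $u_2$ to $(1,h)$ inside the centralizer of $(g,1)$) asks for a simultaneous conjugacy, which is again equivalent to controlling these joint multiplicities; nothing in the sketch produces that control.

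The mechanism that actually closes the gap is $p$-adic, and it is stated in the paper as Theorem~\ref{th:padic}; it is used in precisely this way in the example of the group of order $150$. Write $H=G\times\Pi$. For each prime $p$ dividing $|\Pi|$, the Sylow $p$-subgroup $P$ of $\Pi$ is normal in $H$ (this is where nilpotency of $\Pi$ enters), and the $p$-part $v_p$ of $u$ maps to $1$ under $\ZZ H\to\ZZ(H/P)$, since its order is a power of $p$ while the exponent of $H/P$ is prime to $p$. By Theorem~\ref{th:padic}(2), $v_p$ is conjugate in $\ZZ_pH$ to an element $h_p$ of $H$ (of $p$-power order, so up to conjugacy $h_p\in 1\times P$). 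Theorem~\ref{th:padic}(1) then says that $\varepsilon_{C\times D}(u)\neq 0$ forces the $p$-part of $(c,d)$, which is $(1,d_p)$ because $p\nmid|G|$, to be conjugate in $H$ to $h_p$. Running over all $p\mid|\Pi|$ pins the $\Pi$-component $d$ up to conjugacy in $\Pi$ (conjugacy in a nilpotent group is decided on primary parts), so for each class $C$ of $G$ at most one class $C\times D_0$ of $H$ carries a nonzero partial augmentation of $u$, whence $\varepsilon_{C\times D_0}(u)=\sum_D\varepsilon_{C\times D}(u)=\varepsilon_C(\pi_G(u))\geqslant 0$ by (ZC1) for $G$. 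The same applies to every power of $u$, and Theorem~\ref{th:MRSW} finishes the proof. Note two things: on this route (ZC1) for $\Pi$ is never invoked -- Weiss-type $p$-adic conjugacy replaces it -- and the survey itself gives no proof but cites \cite[Proposition~8.1]{Her08}, where Hertweck argues along exactly these lines.
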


\subsection{Further possible applications}

Although the HeLP method has been applied in different ways by many researchers, it seems very plausible it still can provide new insight into different questions for subgroups of $\V(\ZZ G)$, in particular when one uses new knowledge available on partial augmentations and studies questions which have not been in the focus of attention so far.

For example, the following result regarding partial augmentations of units of finite order is far from having been exploited in its full strength.

\begin{theorem}[Hertweck]\label{th:padic}  Let $u \in \V(\ZZ G)$ be a unit of finite order and $p$ a prime.
\begin{enumerate}
\item Assume that the $p$-part of $u$ is conjugate in $\ZZ_p G$, the $p$-adic group ring of $G$, to an element $h \in G$. If $\varepsilon_C(u) \neq 0$, then the $p$-part of an element in $C$ is conjugate in $G$ to $h$ \cite[Lemma 2.2]{Her08}.
\item Assume $N$ is a normal $p$-subgroup of $G$ such that $u$ maps to the identity under the natural projection $\ZZ G \rightarrow \ZZ (G/N)$. Then $u$ is conjugate in $\ZZ_p G$ to an element of $G$ \cite{MargolisHertweck}.
\end{enumerate}
\end{theorem}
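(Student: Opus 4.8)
The plan is to handle the two parts by quite different mechanisms: part (1) is a statement about how integrality forces the partial augmentations to respect Brauer's decomposition of $G$ into $p$-sections, whereas part (2) is a conjugacy statement that ultimately rests on Weiss's rigidity results for $p$-adic lattices. For part (1), I would first use the hypothesis to normalise the situation: since the $p$-part $u_p$ is conjugate in $\ZZ_p G$ to $h$, replace $u$ by a $\ZZ_p G$-conjugate so that $u_p = h$ outright, and write $u = h\,u_{p'}$, where $u_{p'}$ is the $p'$-part, a $p$-regular unit commuting with $h$. Recall that the \emph{$p$-section} of a $p$-element $g$ is the set of classes $C$ whose elements have $p$-part conjugate to $g$; the assertion to be proven is exactly that $\varepsilon_C(u)=0$ whenever $C$ lies outside the $p$-section of $h$. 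The key tool is Brauer's theory of generalised decomposition numbers \cite{CR81}: for a $p$-element $g$ and a $p$-regular element $s$ commuting with $g$ one has $\chi(gs)=\sum_{\varphi} d^{\,g}_{\chi\varphi}\,\varphi(s)$, where $\varphi$ runs over the irreducible $p$-Brauer characters of $\C_G(g)$ and the numbers $d^{\,g}_{\chi\varphi}$ depend on $g$ only up to conjugacy. I would apply this simultaneously to $u=h\,u_{p'}$ and to each class representative $c=c_p c_{p'}$ occurring in \eqref{eq:character}, rewriting the identity $\chi(u)=\sum_C \varepsilon_C(u)\chi(C)$ as a relation among the Brauer characters attached to the various $p$-sections.

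The decisive step is then an orthogonality, or ``linear independence across sections'', argument: the family of generalised decomposition numbers indexed by a $p$-element $g$ is supported on and determined by the $g$-section, and the families belonging to non-conjugate $p$-elements are independent. Since $u_p=h$ places the whole of $\chi(u)$ in the $h$-section, comparing the $g$-section components for every $g\not\sim h$ and every ordinary character $\chi$ forces the contribution $\sum_{C}\varepsilon_C(u)\chi(C)$ over the $g$-section to vanish identically; invertibility of the relevant generalised decomposition data over $\C_G(g)$ then yields $\varepsilon_C(u)=0$ for all such $C$, which is the claim. The main obstacle here is the bookkeeping needed to keep the integral (indeed rational-integer-valued) partial augmentations under control: one must check that passing to $\ZZ_p G$ does not disturb the $\varepsilon_C(u)$, that $u_{p'}$ may genuinely be taken $p$-regular and commuting with $h$ after the normalisation, and that the section orthogonality is applied to the correct centraliser data rather than to $G$ itself.

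For part (2), the strategy is first to pin down the order of $u$ and then to invoke a conjugacy theorem. The hypothesis that $u$ maps to the identity in $\ZZ(G/N)$ means $u\in 1+I$, where $I=\ker(\ZZ_p G\to\ZZ_p(G/N))$ is generated by $\{n-1:n\in N\}$. Because $N$ is a normal $p$-subgroup, the image of $I$ in $\FF_p G$ is nilpotent and lies in the Jacobson radical, so the reduction $\bar u$ is unipotent; since the kernel of $\U(\ZZ_p G)\to\U(\FF_p G)$ is a pro-$p$ group and $u$ is torsion, this forces $u$ to be a $p$-element of $\V(\ZZ_p G)$ lying in $1+I$. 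It remains to show that such a $p$-element is $\ZZ_p G$-conjugate to a group element, necessarily one of $N$ since it is trivial modulo $N$. Here I would apply Weiss's rigidity theorem for $p$-adic lattices \cite{Wei91}: the cyclic $p$-group $\langle u\rangle$ acts on the regular $\ZZ_p G$-lattice, and one verifies that, relative to the normal $p$-subgroup $N$, this lattice is a permutation module for $\langle u\rangle$, whence $\langle u\rangle$ and in particular $u$ is conjugate in $\U(\ZZ_p G)$ into $G$. The hard part will be verifying the permutation-lattice hypothesis of Weiss's theorem in the presence of $N$: one must exploit that $u$ acts trivially on $G/N$ to recognise or construct the required $\langle u\rangle$-stable permutation basis, and this is exactly where the normality and $p$-group nature of $N$ are indispensable.
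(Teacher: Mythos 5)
First, a point of calibration: the paper itself gives no proof of Theorem~\ref{th:padic} --- part (1) is imported from \cite[Lemma 2.2]{Her08} and part (2) from \cite{MargolisHertweck} --- so your proposal can only be measured against those sources. In outline you have identified their machinery correctly (generalized decomposition numbers and $p$-sections for (1), Weiss's rigidity theorem for (2)), and your argument in (2) that $u$ must be a $p$-element (unipotence of $\overline{u}$ in $\FF_p G$ because the kernel of $\FF_p G \rightarrow \FF_p (G/N)$ is nilpotent, plus the fact that the kernel of $\U(\ZZ_p G) \rightarrow \U(\FF_p G)$ is pro-$p$) is correct. The gap in part (1) is that the formula $\chi(gs)=\sum_{\varphi} d^{\,g}_{\chi\varphi}\varphi(s)$ you quote from \cite{CR81} is a theorem about \emph{group elements}. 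After your normalization, $u = h\,u_{p'}$ where $u_{p'}$ is a torsion \emph{unit} of $\ZZ_p G$ commuting with $h$; it does not in general lie in $\ZZ_p \C_G(h)$, so the expression $\varphi(u_{p'})$, for $\varphi$ a Brauer character of $\C_G(h)$, has no a priori meaning, and the ``independence across sections'' you invoke is likewise only available for class functions evaluated at group elements. Establishing a generalized-decomposition formula and section orthogonality that apply to the unit $u$ itself is precisely the content of Hertweck's work (\cite{HertweckBrauer}, on which \cite{Her08} relies); your plan assumes this rather than proves it, so the essential difficulty of the statement is left untouched.

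In part (2) the decisive step is a non sequitur as written: ``$\ZZ_p G$ is a permutation module for $\langle u\rangle$, whence $u$ is conjugate in $\U(\ZZ_p G)$ into $G$.'' Conjugacy of $u$ to an element $n \in G$ by a unit $v \in \U(\ZZ_p G)$ is equivalent to an isomorphism of $\ZZ_p[\langle t\rangle \times G]$-modules between $\ZZ_p G$ with $t$ acting as left multiplication by $u$ and $\ZZ_p G$ with $t$ acting as left multiplication by $n$, the right $G$-action being right multiplication in both cases: a right $\ZZ_p G$-linear automorphism of $\ZZ_p G$ is left multiplication by a unit $v$, and the intertwining condition reads $vu = nv$. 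A permutation structure for the left $\langle u\rangle$-action alone carries no such information (for instance, by the Berman--Higman argument, $\QQ_p G$ is free as a $\QQ_p\langle u\rangle$-module for \emph{every} normalized torsion unit $u$, so the one-sided structure cannot distinguish $u$ from a group element). What the hypothesis ``$u \mapsto 1$ modulo $N$'' actually buys is this: let the $p$-group $\langle t\rangle \times N$ act on $\ZZ_p G$ by $t\colon m \mapsto um$ and $n\colon m \mapsto mn^{-1}$; the restriction to $1\times N$ is free, and the fixed-point lattice of $1 \times N$ is $\ZZ_p[G/N]$, on which $t$ acts as multiplication by the image of $u$, i.e.\ trivially --- these are exactly the hypotheses of Weiss's theorem, which then yields that $\ZZ_p G$ is a permutation $\ZZ_p[\langle t\rangle \times N]$-lattice, with stabilizers necessarily of diagonal type $\langle (t,n_i)\rangle$, $n_i \in N$. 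But even at that point one is not done: this is information about the right action of $N$ only, and one must still upgrade it to an isomorphism compatible with the right action of all of $G$ in order to produce the conjugating unit. That final step is a genuine part of the argument in \cite{MargolisHertweck} and is absent from your sketch.
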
 

We provide an example of how this result can be applied:

\begin{example}
 Let $G$ be the semidirect product of the faithful irreducible $\mathbb{F}_5S_3$-module with $S_3$, $ G = (\mathbb{F}_5 \times \mathbb{F}_5) \rtimes S_3$. This is a group of order $150$ with \textsf{GAP}-Id \texttt{[150,5]}. By \cite{BHKMS18} the existence of a unit of order $10$ in $\V(\ZZ G)$ which has non-vanishing partial augmentations at two different conjugacy classes of elements of order $10$ can not be excluded using HeLP and other standard methods. Assume such a unit $u$ exists. Now $G$ has a normal Sylow $5$-subgroup $N$ and it follows that the $5$-part of $u$ maps to the identity under the map $\ZZ G \rightarrow \ZZ (G/N)$. So by the previous theorem the $5$-part of $u$ is conjugate to an element of $G$ in $\ZZ_5 G$. Hence, by the first part of the theorem, if $\varepsilon_C(u) \neq 0$ and $\varepsilon_D(u) \neq 0$, then the $5$-parts of elements in $C$ and $D$ are conjugate in $G$. But this is not the case for any two conjugacy classes of elements of order $10$ in $G$, contradicting the existence of $u$. Since this was the only case that was left open for (ZC1) in \cite{BHKMS18} we conclude that in fact (ZC1) holds for $G$.
\end{example}

Using the character-theory of supersolvable groups one could try to see how much the HeLP method, combined also with Theorem~\ref{th:padic}, can do to solve the following.

\begin{problem} Is (ZC1) true for supersolvable groups?\end{problem} 

We note that the counterexamples to (ZC1) given in \cite{EM18} are not too far from being supersolvable at first sight: they are of the form $(C_{pq} \times C_{pq}) \rtimes A$ for certain primes $p$ and $q$ and an abelian group $A$. It is however of utter importance for the construction of the examples that they possess minimal normal elementary-abelian subgroups for two different primes which are both not cyclic.

The following open problem has not been much studied, as (ZC1) was the central conjecture in the area for many years. With the solution of (ZC1) it now deserves more attention in our opinion.\\

\noindent\textbf{Kimmerle Problem (KP):}\quad Let $G$ be a finite group. Is any unit of finite order in $\U(\ZZ G)$ conjugate in the rational group algebra $\QQ H$ to a trivial unit of $\ZZ G$ for $H$ a finite group containing $G$? \\

Since (KP) turned out to be equivalent to a problem initiated by A.~Bovdi, (KP) is sometimes also abbreviated as (Gen-BP).
The following result gives a concrete approach to (KP) \cite[Proposition 2.1 and its proof]{MdR19}:

\begin{proposition} Let $u \in \V(\ZZ G)$ be a unit of order $n$.
\begin{enumerate}
\item There exists a finite group $H$ containing $G$ such that $u$ is conjugate in $\QQ H$ to an element of $G$ if and only if the sum of coefficients of $u$ with respect to elements of $G$ of order $m$ is $0$ for all $m \neq n$.
\item If (KP) holds for $G$, then one can take $H$ to be the symmetric group on $G$ in which $G$ is embedded by its regular action.
\end{enumerate}
\end{proposition}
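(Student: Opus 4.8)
The plan is to handle both parts through one construction---the regular embedding $G\hookrightarrow\operatorname{Sym}(G)$---after first disposing of the forward implication of (1) by a class-function argument. For the forward direction, the key point is that $h\mapsto\operatorname{ord}(h)$ is constant on conjugacy classes of any finite group $H\supseteq G$, so for each $m$ the indicator $\delta_m$ of the elements of order $m$ is a class function on $H$, hence a $\mathbb{C}$-linear combination of $\Irr(H)$. Every complex character of $H$ is invariant under conjugation by units of $\QQ H$ and extends linearly to $\QQ H$; thus, if $x^{-1}ux=g_0$ with $g_0\in G$ and $x\in\U(\QQ H)$, then $\delta_m(u)=\delta_m(g_0)$ for all $m$. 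As $g_0$ and $u$ share the order $n$ and $G\leqslant H$ preserves orders, this reads $\sum_{\operatorname{ord}(g)=m}z_g=\delta_m(g_0)=[m=n]$, which is exactly the asserted vanishing. Applying the same to each power via $x^{-1}u^kx=g_0^k\in G$ gives the vanishing for every power $u^k$ as well; this is the observation that will link (1) to (2).

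The engine for the converse is the elementary fact that left multiplication by $g\in G$ is a fixed-point-free permutation of the set $G$ all of whose cycles have length $\operatorname{ord}(g)$, so the cycle type, and hence the $\operatorname{Sym}(G)$-class, of the image of $g$ depends only on $\operatorname{ord}(g)$; consequently every class function of $\operatorname{Sym}(G)$ restricts on $G$ to a function of the order alone. Now $\QQ\operatorname{Sym}(G)\cong\prod_{\lambda}M_{f_\lambda}(\QQ)$ is split, and since $u$ and $g_0$ are torsion they are semisimple in each block; two semisimple matrices over a field are conjugate exactly when they share a characteristic polynomial, and that polynomial is determined by the power traces. Hence $u$ is conjugate to $g_0$ in $\QQ\operatorname{Sym}(G)$ if and only if $\chi_\lambda(u^k)=\chi_\lambda(g_0^k)$ for every irreducible character $\chi_\lambda$ and every $k\geqslant 1$. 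Writing $\chi_\lambda(u^k)=\sum_m\chi_\lambda(\tau_m)\,s_m^{(k)}$, where $\tau_m$ is the common cycle type of the order-$m$ elements and $s_m^{(k)}$ is the sum of the coefficients of $u^k$ over elements of order $m$, and noting $\chi_\lambda(g_0^k)=\chi_\lambda(\tau_{\operatorname{ord}(u^k)})$, linear independence of the columns of the character table forces these equalities, for all $\lambda$, to be equivalent to $s_m^{(k)}=0$ for all $m\neq\operatorname{ord}(u^k)$. Thus $u$ is $\QQ\operatorname{Sym}(G)$-conjugate to $g_0$ iff the vanishing condition holds for $u$ \emph{and} each of its powers; in particular $\operatorname{Sym}(G)$ is a universal witness. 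Part (2) then follows: once (KP) guarantees that each power $u^k$ is conjugate in some $\QQ H_k$ to a group element, the forward implication of (1) applied to every $u^k$ supplies the vanishing for all powers, and the reduction places $u$ and $g_0$ in the same $\QQ\operatorname{Sym}(G)$-class. (A suitable $g_0\in G$ of order $n$ exists because the stated condition gives $\sum_{\operatorname{ord}(g)=n}z_g=1\neq0$.)

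The step I expect to be the main obstacle is the converse of (1): from the single stated condition on $u$ one must produce an overgroup, and taking $H=\operatorname{Sym}(G)$ the previous paragraph reduces this to the \emph{propagation} of the vanishing from $u$ to all of its powers. The forward implication shows this propagation is automatic whenever some witness exists, since $u\sim_{\QQ H}g_0$ forces $u^k\sim_{\QQ H}g_0^k\in G$; the real work is therefore to establish it directly. My approach would be to feed the matched data $\chi_\lambda(u)=\chi_\lambda(g_0)$ at $k=1$ into the nonnegativity of the eigenvalue multiplicities of Theorem~\ref{th:LP} and bootstrap through the divisors of $n$. The delicate point is that a single trace does not pin down a conjugacy class in a block of dimension greater than one, so one must control the multiplicities simultaneously across all Specht modules rather than one representation at a time; this simultaneous integrality is where the argument is most likely to demand care.
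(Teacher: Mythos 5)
Your construction is the right one, and it is the same engine as in the source the paper cites for this result, \cite[Proposition 2.1]{MdR19}: the forward implication via order-indicator class functions of $H$; the passage to $H=\operatorname{Sym}(G)$ with $G$ embedded by its regular action, so that class functions of $H$ restricted to $G$ depend only on element orders; and the criterion that in the split semisimple algebra $\QQ\operatorname{Sym}(G)$ two elements of finite order are conjugate precisely when all power traces agree. All of this is correct, as is your proof of part (2): (KP) applied to each power $u^k$ yields the vanishing condition for $u^k$ by your forward direction, and your $\operatorname{Sym}(G)$ criterion then places $u$ in the $\QQ\operatorname{Sym}(G)$-conjugacy class of an element $g_0\in G$ of order $n$ (which exists because the order-$n$ coefficient sum equals $1$).

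However, you have not proved part (1) as it is stated, and you say so yourself: your criterion for conjugacy in $\QQ\operatorname{Sym}(G)$ requires the vanishing condition for $u$ \emph{and all of its powers}, whereas the stated hypothesis concerns $u$ alone, and your ``propagation'' step is a plan, not a proof. Moreover, the plan cannot work as described. Writing $s_m(v)$ for the sum of coefficients of $v$ at elements of order $m$, the tools you invoke (Theorem~\ref{th:LP} and Proposition~\ref{prop:partaugs}) produce only inequalities and congruences, and these cannot force the exact equalities you need: for a hypothetical unit $u$ of order $8$ satisfying the hypothesis they give $s_2(u^2)\equiv s_4(u)=0 \pmod{2}$, not $s_2(u^2)=0$. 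More fundamentally, no class-function (hence linear) argument can control the coefficients of $u^k$ in terms of those of $u$, since $u\mapsto u^k$ is not linear; the cases $n$ prime, $n=p^2$ and $n=pq$, where propagation is automatic from Proposition~\ref{prop:partaugs}, are misleadingly easy. What you have actually proved is the equivalence of: (a) existence of $H$; (b) conjugacy in $\QQ\operatorname{Sym}(G)$; (c) the vanishing condition for every power $u^d$ --- which is the form of the result in \cite{MdR19}, whose condition is imposed on all powers; the one-line phrasing in the present paper elides ``and its powers''. This discrepancy is harmless for the intended application to (KP), because (KP) quantifies over all torsion units of $\V(\ZZ G)$ and that set is closed under taking powers --- exactly how your proof of part (2) proceeds --- but read literally, the ``if'' direction of part (1) is precisely the unproved propagation, so against the stated proposition your proof has a genuine gap there, and the bootstrap you propose is not a viable way to close it.
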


This clarifies how tools developed for the study of (ZC1) can be applied to answer (KP). For instance, an immediate consequence of \cite{BHKMS18} and the previous proposition is that (KP) has a positive answer for all groups of order smaller than $288$. (KP) has a positive answer for all currently known counterexamples to (ZC1) \cite[Remark~3.4]{BKS20}. Moreover, (KP) has an affirmative answer for all supersolvable groups \cite[Theorem~3.6]{BKS20}. This suggests to study the following.

\begin{problem} Does (KP) have a positive answer for solvable groups?\end{problem}

The result on groups of small order using extensions of coefficients in Section~\ref{subsect:extcoeff} was also powerful enough to prove that (KP) has an affirmative answer for $G \times A$, where $A$ is any finite abelian group and $G$ has order at most $95$. The exceptions that remaind for (ZC1) could be handled with a special argument \cite[Theorem~5.18]{BKS20}. In a similar manner, (KP) was verified for groups of the form $G\times A$, where $G$ is a Frobenius group and $A$ is any finite abelian group \cite[Theorem~5.17]{BKS20}. In this setting some additional arguments were needed to handle certain Frobenius complements. The first Zassenhaus Conjecture is still open for Frobenius groups in general.
\begin{problem}
 Does (ZC1) hold for Frobenius groups? Does it hold for $2$-Frobenius groups?
\end{problem}
A theorem of S. O. Juriaans and C. Polcino Milies \cite{JPM00} asserts that the order of a normalized torsion unit in the integral group ring of a Frobenius group $G$ either divides the order of the Frobenius kernel $K$ or the order of the Frobenius complement. With a classical approach, by reducing modulo a nilpotent normal subgroup, Hertweck proved that those with an order dividing $|K|$ are rationally conjugate to elements of $K$, see \cite{Her12}. For many Frobenius complements, the first Zassenhaus Conjecture is known, see the end of the introduction of the just mentioned article of Hertweck.

\section{Lattice}\label{sec:Lattice}

We will revisit Hertweck's proof of Theorem~\ref{th:A6} and explain how it can be understood to be a special instance of a more general method. We will include the commands from \textsf{GAP} and the \textsf{HeLP}-package which can be used to verify the calculations. For more details regarding installation and on how to explicitly use the commands we refer the reader to the extensive documentations of these programs.

When studying (ZC1) for $G = A_6$, the alternating group of degree $6$, one finds that the HeLP method is not sufficient for a proof: there remain problems with elements of order $6$. So, in fact, one can not even answer the Prime Graph Question. There is a potential unit $u \in \V(\ZZ G)$ of order $6$ such that $(\varepsilon_{\texttt{2a}}(u), \varepsilon_{\texttt{3a}}(u), \varepsilon_{\texttt{3b}}(u)) \in \{(-2,2,1), (-2,1,2) \}$ (using the \textsf{GAP}-notation for the conjugacy classes in \texttt{CharacterTable("A6")}) that cannot be excluded. This fact is manually computed in \cite{HertweckA6}. It can also easily be seen using the command \texttt{HeLP\_ZC(CharacterTable("A6"));} of the \textsf{HeLP}-package followed by \texttt{HeLP\_sol[6]}. The two sets of possible partial augmentations of units of order $6$ are mapped to each other under the outer automorphism of $G$, hence it suffices to exclude one of them. We will handle the case $(\varepsilon_{\texttt{2a}}(u), \varepsilon_{\texttt{3a}}(u), \varepsilon_{\texttt{3b}}(u)) = (-2,2,1)$ in which $u^2$ is conjugate in $\QQ G$ to an element of $\texttt{3b}$.

As explained in Section~\ref{sec:HeLP}, from the partial augmentations of $u$ and its powers we can compute the character values of $u$ under any ordinary representation of $G$. This is also implemented in the \textsf{HeLP}-package in the command \texttt{HeLP\_MultiplicitiesOfEigenvalues}. Denote by $\zeta$ a primitive complex 3-rd root of unity. $G$ has two non-equivalent irreducible $5$-dimensional complex representations, which in fact can be realized over $\ZZ$; they correspond to \texttt{Irr(C)[2]} and \texttt{Irr(C)[3]} where we set \texttt{C := CharacterTable("A6");}. Denote these representations by $D_{5a}$ and $D_{5b}$. Denote by $A \sim B$ the fact that two complex matrices are conjugate and by $\text{diag}(a_1,...,a_n)$ a diagonal matrix with diagonal entries $a_1$,...,$a_n$. Then
$$D_{5a}(u) \sim \text{diag}(1,\zeta,\zeta^2,-\zeta,-\zeta^2) \qquad \ \text{and} \qquad \ D_{5b}(u) \sim \text{diag}(1,\zeta,\zeta^2,-1,-1).$$

Let $R = \ZZ_{(3)}$ be the localization of $\ZZ$ at $3$, i.e.\ the rational numbers with denominator coprime to $3$, and $\mathbb{F}_3 = k = R/3R$ the residue field of $R$ modulo its maximal ideal. For the representations $D_{5a}$ and $D_{5b}$ we have corresponding simple $RG$-lattices $L_{5a}$ and $L_{5b}$ respectively which map to the $kG$-modules $\overline{L}_{5a}$ and $\overline{L}_{5b}$ modulo the ideal $3R$.  
From the decomposition matrix of $G$ modulo $3$, accessible via \texttt{DecompositionMatrix(C mod 3);}, we find that $\overline{L}_{5a}$ and $\overline{L}_{5b}$ are not simple any more, and their composition factors are the trivial module and a common $4$-dimensional simple $kG$-module $\overline{L}$. Here $\overline{L}$ corresponds to \texttt{Irr(C mod 3)[4]}. Using a special argument, Hertweck then shows that one can assume that $\overline{L}$ is in fact a submodule of both, $\overline{L}_{5a}$ and $\bar{L}_{5b}$. 

Clearly, $e = \frac{1+u^3}{2}$ is an idempotent in $RG$ and hence $L_{5b} = L_{5b}e \oplus L_{5b}(1-e)$. Note that $L_{5b}e$ is the part of $L_{5b}$ on which the involution $u^3$ acts trivially, while it acts non-trivially, namely as multiplication by $-1$, on $L_{5b}(1-e)$. So the action of $\langle u \rangle$ on $L_{5b}e$ corresponds to the eigenvalues $(1, \zeta, \zeta^2)$ and the action on $L_{5b}(1-e)$ to $(-1,-1)$. In particular, the $k\langle \overline{u} \rangle$-module $\overline{L}_{5b}(1-\overline{e})$ is the direct sum of two 1-dimensional modules, since $-1 \in k$. Furthermore, if we consider $\overline{L}$ as a submodule of $\overline{L}_{5b}$, then $\overline{L}_{5b}(1-\overline{e}) = \overline{L}(1-\overline{e})$. This can be seen since the other composition factor of $\overline{L}_{5b}$ is the trivial module, but $u^3$ has no fixed points on $\overline{L}_{5b}(1-\overline{e})$. Alternatively, this can also be concluded from the fact that the $3$-Brauer character corresponding to $\overline{L}$ is of degree $4$ and has value $0$ on \texttt{2a}, i.e.\ $\overline{L}\overline{e}$ and $\overline{L}(1-\overline{e})$ must both be $2$-dimensional. Overall the $k\langle u \rangle$-module $\overline{L}(1-\overline{e})$ is the direct sum of two $1$-dimensional modules.

Arguing analogously for $L_{5a}$ we find $\overline{L}_{5a}(1-\overline{e}) \cong \overline{L}(1-\overline{e})$ as $k\langle \overline{u} \rangle$-module. The eigenvalues corresponding to $L_{5a}(1-e)$ are $(-\zeta, -\zeta^2)$ which means that $L_{5a}(1-e)$ is an indecomposable $R\langle u \rangle$-module, as $R$ does not contain a primitive $3$-rd root of unity. By a classical theorem of Minkowski the map $\operatorname{GL}_2(R) \rightarrow \operatorname{GL}_2(k)$ does not contain an element of order $3$ in its kernel. This means that also $\overline{L}_{5a}(1-\overline{e})$ is indecomposable as $k\langle \overline{u} \rangle$-module, hence so is $\overline{L}(1-\overline{e})$. But in the last paragraph we found that $\overline{L}(1-\overline{e})$ is the direct sum of two $1$-dimensional modules, a contradiction with the existence of $u$. 

Let us recapitulate (in a different order): We started with a certain distribution of partial augmentations of a hypothetical unit $u \in \V(\ZZ G)$ of finite order divisible by $p = 3$. We calculated the eigenvalues of $u$ under two representations $D_1$, $D_2$ of $G$ in characteristic $0$ over a ring $R$ in which $p$ is not invertible and lies in the maximal ideal $M$. From these we derived knowledge on the structure of the $RG$-lattices $L_1$, $L_2$ corresponding to the representations when viewed as $R\langle u \rangle$-lattices. From these structures we derived information on the structure of $\overline{L}_1$ and $\overline{L}_2$ as $k\langle \overline{u} \rangle$-modules, where $k$ is the residue field $R/M$. We then looked at the composition factors of $\overline{L}_1$ and $\overline{L}_2$ as $kG$-modules and analyzed what our previous calculations imply for the structure of these composition factors when viewed as $k \langle \overline{u} \rangle$-modules. We found that for a certain common composition factor of $\overline{L}_1$ and $\overline{L}_2$ the possible $k \langle \overline{u} \rangle$-module structures were not compatible and this led to the final contradiction.

In a more general context this leads to the following method, refered to as \emph{lattice method}, introduced in \cite{BM17}:

\begin{enumerate}
\item Start with a group $G$ and a hypothetical unit $u \in \V(\ZZ G)$ of order $p^\ell m$ for which partial augmentations are known. Here $p$ is a prime, $\gcd(p, m) =1$ and $\ell \geqslant 1$. Moreover let $\chi$ be an irreducible ordinary character of $G$.

\item Let $K'$ be the smallest filed of characteristic $0$ which contains all character values of $\chi$ and a primitive root of unity of order $m'$ where $m'$ is the largest divisor of $|G|$ coprime to $p$. Let $R$ be the ring of integers of the $p$-adic completion of $K'$ and let $\zeta$ be an $m$-th primitive root of unity in $R$. The reason for the choice of this ring $R$ is on one hand a theorem of Fong which guarantees that there is an $R$-representation $D$ of $G$ with $RG$-module $M$ and character $\chi$ and at the same time that $p$ is as unramified as possible in a ring with this property. The motivation for the last fact will become clear later. One can also choose $R$ to be any complete discrete valuation ring of characteristic $0$ such that: $R$ contains a primitive $m$-th root of unity, $p$ is contained in the maximal ideal of $R$ and $p$ is unramified in the extension of the field of fractions of $R$ over $\mathbb{Q}_p(\chi)$. Here $\mathbb{Q}_p(\chi)$ denotes the extension of the $p$-adic rationals generated by the values of $\chi$. 
Let $k$ be the residue field of $R$. 

Note that in the example above we were working over the ring $\ZZ_{(3)}$ which was smaller, but it will follow that our choice is not restrictive as $\ZZ_{(3)}$ has all the needed properties.

\item Next we derive restrictions on the structure of $M$ as $R\langle u \rangle$-module from the eigenvalues of $D(u)$. If one uses only these eigenvalues, this is an impossible task in the generic case, but often it can be done in concrete situations. First of all note that from our choice of $R$ we can decompose $M$ as an $R\langle u \rangle$-module into a direct sum of $m$ modules $M_i$. These modules are the maximal submodules of $M$ on which $u^{p^\ell}$ acts as $\zeta^i$ for $1 \leqslant i \leqslant m$. I.e.\ the $M_i$ corresponds to the eigenvalues of $D(u)$ which have $p'$-part $\zeta^i$.  This makes the considerations on idempotents we had to do in the example above superfluous.

Now to get more restrictions on the isomorphism type of the $M_i$ one needs to study $RC_{p^\ell}$-modules. This is in general a wild problem, but when $\ell = 1$ and $p$ is unramified in $R$, then this can be done generically. Note that this is exactly the case in the example above. In general the bigger $\ell$ is and the more $p$ ramifies in $R$, the harder it is to describe the representation theory of $RC_{p^\ell}$. This explains our choice of $R$.

\item Using the bar-notation for the reduction modulo the maximal ideal of $R$, we then obtain restrictions on the structure of $\overline{M}$ as $k \langle \overline{u} \rangle$-module. Clearly $\overline{M}$ decomposes into a direct sum of the $\overline{M}_i$ and each of the $\overline{M}_i$ can be considered as a $kC_{p^\ell}$-module as the action of the $p'$-part on $\overline{M}_i$ is fixed. The representation theory of $kC_{p^\ell}$ is much easier to understand than that of $RC_{p^\ell}$: For each $j \leqslant p^\ell$ there is a unique indecomposable $kC_{p^\ell}$-module of dimension $j$ and this module is uniserial. One can think of $C_{p^\ell}$ acting on this module by a Jordan block with eigenvalue $1$ and size $j$. 

We now consider the composition factors of $\overline{M}$ as $kG$-module, but view them as $k \langle \overline{u}\rangle$-modules. Each of them decomposes also into a direct sum of $m$ modules which correspond to different $m$-th roots of unity in $k$ and we derive our knowledge on the possible isomorphism types of these modules from our knowledge on the structure of the $\overline{M}_i$. This last step can be made quit explicit using a combinatorial procedure involving Young tableaux. We will not describe it here, but just mention that it shows that if we do not use that the direct sum of certain $kG$-composition factors appears in the socle series of $\overline{M}$ as $kG$-module, then we can assume any composition series we like. In particular the argument from the example above that $\overline{L}$ was in fact a submodule of $\overline{L}_{5a}$ becomes obsolete.  
\end{enumerate}

We refer the reader to \cite{BM17, 4primaryII} for more technical details and references. A further example of the application of the method will be given in the next section. 
Although the lattice method was also used to prove (ZC1) in some instances where HeLP was not sufficient, apart from \cite{HertweckA6} this was done in \cite{BM17, 4primaryII}, it has so far proved to be of particular strength for the study of (PQ). One reason for this is that the representation theory of $RC_{p^\ell}$ is much more approachable when $\ell=1$, see (3) above.

The method was originally developed to complete the proof of (PQ) by Kimmerle and Konovalov for $3$-primary groups \cite{KK16}. Here a group is called $3$-primary, if its order is divisible by exactly three pairwise different primes. It was shown in \cite{KK16} that this result will follow if one can show that $\V(\ZZ G)$ does not contain elements of order $6$ for $G \in \{M_{10}, \operatorname{PGL}(2,9) \}$, which are both almost simple groups containing $A_6$ as a normal subgroup of index $2$. This was shown to be indeed true in \cite[Theorem 1.2]{BM17}.

In \cite{4primaryII} the combinatorics involved in step (4) of the method were developed and it was applied to answer (PQ) for many 4-primary groups, but also for (potentially) infinite series of groups (the infinity of the number of groups being a question of number theory). The analysis of a particular combinatorial situation coming up in the application of the HeLP method further led to the proof of (PQ) for three new sporadic simple groups for which the HeLP method was not sufficient \cite{Mar19}. 

Since there is a reduction of (PQ) to almost simple groups, one obvious way is to use the Classification of Finite Simple Groups to prove (PQ). One important class to deal with are those groups that have an alternating group as a socle. While considering those groups an interesting relation between the two methods described in this article turned up. Some generic results on units of sufficiently large order were achieved using the HeLP method \cite{BC17} and solely using HeLP, it was proved that (PQ) holds for all $A_n$ and $S_n$ for $n \leqslant 17$ (with the exception of $A_6$ handled above) \cite{Sal11, Sal13, BC17}. Thus there was very strong computational evidence that HeLP should be able to answer (PQ) affirmatively for all $A_n$ and $S_n$. But the combinatorial nature of character formulas for symmetric groups makes it hard to apply them in arithmetical formulas such as in Theorem~\ref{th:LP}. Further developing ideas from \cite{4primaryII} and \cite{Mar19} the authors eventually showed the following:
{\begin{theorem} \cite{BM19}
(PQ) holds for all almost simple groups with an alternating socle. In particular, (PQ) holds for all alternating and all symmetric groups.           
 \end{theorem}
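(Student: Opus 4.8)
The plan is to combine the Kimmerle--Konovalov reduction with a case analysis steered by permutation characters, and to fall back on the lattice method of Section~\ref{sec:Lattice} precisely where the HeLP inequalities of Theorem~\ref{th:LP} are too weak. By the reduction of (PQ) to almost simple quotients \cite{KK16} it suffices to verify (PQ) for every almost simple group with socle $A_n$. For $n \neq 6$ these are exactly $A_n$ and $S_n = \Aut(A_n)$; for $n = 6$ one meets in addition $\mathrm{PGL}(2,9)$, $M_{10}$ and $\mathrm{P}\Gamma\mathrm{L}(2,9)$. The groups with socle $A_6$ are already settled (Theorem~\ref{th:A6} together with \cite{BM17}), and small $n$ is covered by the existing computations \cite{Sal11, Sal13, BC17}, so the real content is a uniform treatment of $A_n$ and $S_n$ for all large $n$.

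Fix $G \in \{A_n, S_n\}$ and distinct primes $p < q$ such that $G$ has no element of order $pq$; I must show $\V(\ZZ G)$ has none either. A cycle-type computation turns the hypothesis into an arithmetic condition on $(n,p,q)$ (for $S_n$ simply $p+q>n$, with a parity correction for $A_n$, since in $A_n$ a $p$- and a $q$-cycle must combine to an even permutation). Suppose a unit $u \in \V(\ZZ G)$ of order $pq$ existed. By Theorem~\ref{th:CL} we have $p,q \le n$, and by Proposition~\ref{prop:partaugs}(i),(ii) every class $C$ with $\varepsilon_C(u)\neq 0$ consists of elements of order dividing $pq$; as no element of order $pq$ exists in $G$, the unit $u$ is supported only on $p$- and $q$-elements, and likewise the partial augmentations of $u^q$ (order $p$) and $u^p$ (order $q$) are supported on $p$- resp.\ $q$-classes. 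I would treat all of these partial augmentations as unknowns constrained simultaneously by the HeLP system over the divisors $d \mid pq$.

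To extract a contradiction I would feed into Theorem~\ref{th:LP} the natural permutation character $\pi$ together with its small exterior constituents, for which $\pi(C)$ is a fixed-point count and the eigenvalues of the image of a $p$- or $q$-element are completely explicit in terms of the number of nontrivial cycles. Morally, non-negativity and integrality of the multiplicities $\mu(\zeta,u,\pi)$ force the ``$p$-cycle content'' and the ``$q$-cycle content'' seen by $u$ to occupy disjoint blocks of coordinates, thereby reconstructing an honest element of order $pq$ on at most $n$ points and contradicting $p+q>n$. For units of sufficiently large order this purely arithmetic argument already closes, as in \cite{BC17}.

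The hard part is exactly the difficulty flagged before the statement: the Murnaghan--Nakayama values of symmetric-group characters are hostile to the arithmetic of Theorem~\ref{th:LP}, so making the last step uniform in $n,p,q$ is delicate. Here I would deploy the machinery of \cite{4primaryII, Mar19}: instead of evaluating characters directly, reduce a suitable lattice modulo one of the primes (taking $\ell=1$, the tractable case of step (3) in Section~\ref{sec:Lattice}), read the $R\langle u\rangle$-module structure off the eigenvalues of the coprime part, and track the $k\langle\overline{u}\rangle$-types of the $kG$-composition factors via the Young-tableaux combinatorics of step (4). The contradiction then emerges, exactly as in the $A_6$ argument recalled above, from two incompatible module structures forced on a common composition factor. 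I expect the principal obstacle to be the bookkeeping of \emph{which} composition factors and \emph{which} of $p,q$ to reduce at, and verifying that the combinatorics close up for all but finitely many triples $(n,p,q)$, with the residual cases handed back to a direct HeLP computation.
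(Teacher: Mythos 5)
First, note that the survey you are working against does not actually prove this theorem: it is quoted from \cite{BM19}, and the text only records that the proof was obtained by ``further developing ideas from \cite{4primaryII} and \cite{Mar19}''. Your proposal reconstructs that architecture correctly at the level of strategy — reduction to $A_n$, $S_n$ and the finitely many almost simple groups with socle $A_6$, the arithmetic criterion $p+q>n$ for the nonexistence of elements of order $pq$, HeLP (Theorem~\ref{th:LP}) where it suffices, and the lattice method of Section~\ref{sec:Lattice} where it does not. But it is a research plan, not a proof, and the gap sits exactly at the two places where all the content of \cite{BM19} lies. The step you label ``morally'' — that non-negativity and integrality of the multiplicities $\mu(\zeta,u,\pi)$ for the natural permutation character and its exterior constituents force the $p$-cycle and $q$-cycle content of $u$ into disjoint blocks, thereby ``reconstructing an honest element of order $pq$'' — carries no argument, and it is precisely the difficulty the survey flags: a unit supported on $p$- and $q$-classes has \emph{signed} partial augmentations, so its ``cycle content'' is a virtual object that corresponds to no permutation; turning this intuition into a contradiction is the theorem itself, not a lemma one may defer. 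Likewise, the fallback is described only as ``deploy the machinery of \cite{4primaryII, Mar19}'': you do not specify which characters or lattices to use, at which of the two primes to reduce, what constraints on the $k\langle\overline{u}\rangle$-structure of which $kG$-composition factors result, or why those constraints are incompatible — this uniform-in-$(n,p,q)$ combinatorial argument is the new ingredient of \cite{BM19} and cannot be cited into existence from papers that treat different families of groups.

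Two further points. Your closing scheme — the combinatorics ``close up for all but finitely many triples $(n,p,q)$'' and the residual cases are ``handed back to a direct HeLP computation'' — is not only unproved but structurally doubtful: HeLP alone is already insufficient for $A_6$, there is no a priori bound making the exceptional set finite, and the published proof is uniform rather than asymptotic-plus-finite-check. Finally, some bookkeeping slips: the appeal to the \cite{KK16} reduction at the outset is superfluous (the statement to be proved is already about almost simple groups, and the ``in particular'' clause follows because $A_n$ and $S_n$ are almost simple for $n\geqslant 5$ and solvable for $n\leqslant 4$); and your list for socle $A_6$ is incomplete, since Theorem~\ref{th:A6} and \cite{BM17} cover $A_6$, $M_{10}$ and $\mathrm{PGL}(2,9)$, but $S_6$ and $\mathrm{P}\Gamma\mathrm{L}(2,9)$ need the HeLP computations for small degrees respectively the $3$-primary result of \cite{KK16}.
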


Recently, the generic result of \cite{BM19} was generalized to the following clear result that answers (PQ) locally at the vertex $p$ only depending on the Sylow $p$-subgroup of $G$.

\begin{theorem}\label{th:block1} \cite{CM19} Assume the Sylow $p$-subgroup of a finite group $G$ is of order $p$ for some prime $p$. Let $q$ be any prime. Then $\V(\ZZ G)$ contains an element of order $pq$ if and only if $G$ contains an element of order $pq$. 
\end{theorem}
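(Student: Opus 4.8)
The plan is to prove the non-trivial implication: assuming $\V(\ZZ G)$ contains a unit $u$ of order $pq$, produce an element of order $pq$ in $G$; equivalently, assuming $G$ has no element of order $pq$, derive a contradiction from the existence of such a $u$. (The reverse implication is immediate, since a group element of order $pq$ is a normalized unit of order $pq$.) The hypothesis that a Sylow $p$-subgroup of $G$ has order $p$ means that every $p$-block of $G$ has defect $0$ or cyclic defect group of order $p$; this places us exactly in the favourable regime $\ell = 1$ of the lattice method of Section~\ref{sec:Lattice}, where $p$ is unramified in the coefficient ring $R$ and the indecomposable $RC_p$-lattices, as well as the $kC_p$-modules, are completely understood.

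Write $u = u_p u_q$ as the product of its commuting $p$-part $u_p$ (of order $p$) and $q$-part $u_q$ (of order $q$). First I would pin down the $p$-part: I claim $u_p$ is conjugate in $\ZZ_p G$ to a group element $h$ of order $p$. To see this, fix an ordinary irreducible character $\chi$ lying in a $p$-block of defect $1$ and let $M$ be the associated $RG$-lattice; since $u_p$ has order $p$, the module $M$ is directly an $R\langle u_p\rangle = RC_p$-lattice. The multiplicities of the $p$-th roots of unity as eigenvalues of $u_p$ on $M$ are computed from the partial augmentations of $u_p$ by the formula of Theorem~\ref{th:LP}, and these eigenvalues determine the decomposition of $M$ into the finitely many indecomposable $RC_p$-lattices available when $p$ is unramified. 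Reducing modulo the maximal ideal of $R$ and comparing, on one side, the resulting Jordan type of $\overline{M}$ as a $k\langle \overline{u_p}\rangle$-module with, on the other side, the $kC_p$-structure forced by the $kG$-composition factors of $\overline{M}$ (which are governed by the Brauer tree of the block), one finds that the eigenvalue multiplicities of $u_p$ must coincide with those of some group element $h$ of order $p$. This forces $\varepsilon_C(u_p) = \varepsilon_C(h) \geqslant 0$ for all $C$, so that $u_p$ is rationally, and then $p$-adically, conjugate to $h$.

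With the $p$-part identified, the rest is soft. By Theorem~\ref{th:padic}(1), $\varepsilon_C(u)\neq 0$ forces the $p$-part of the elements of $C$ to be conjugate to $h$. Now if $\varepsilon_C(u)\neq 0$ then, by Proposition~\ref{prop:partaugs}, the order of elements in $C$ divides $pq$ and is not $1$; since $G$ has no element of order $pq$ and the $p$-part of $C$ must be the order-$p$ element $h$, the only surviving possibility is that $C$ is the class $h^G$ of $h$ itself. Consequently $\varepsilon_{h^G}(u) = \sum_C \varepsilon_C(u) = 1$ and all other partial augmentations of $u$ vanish, so that by \eqref{eq:character} we get $\chi(u) = \chi(h)$ for every ordinary irreducible character $\chi$. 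Then for each such $\chi$ the matrices $D(u)$ and $D(h)$ are conjugate, whence $D(u^p) = D(u)^p$ is conjugate to $D(h)^p = \mathrm{Id}$ and therefore equals $\mathrm{Id}$; as the irreducible representations separate the points of the semisimple algebra $\QQ G$, this gives $u^p = 1$, contradicting that $u$ has order $pq$.

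The hard part will be the middle step, the identification of the $p$-part $u_p$ with a group element. Everything hinges on the cyclic-defect hypothesis: it is what makes the representation theory of $RC_p$ generic (only the $\ell=1$ case of the lattice method is tractable in general) and what makes the $kG$-module structure of the reductions $\overline{M}$ completely explicit through Brauer trees, so that the two computations of the Jordan type of $\overline{M}$ can be forced to agree only for the eigenvalue pattern of a genuine group element. The combinatorial bookkeeping comparing lattice reductions with composition-factor restrictions --- step (4) of the method --- is where the real work lies; by contrast the transfer to $u$ and the final contradiction are formal consequences of Theorem~\ref{th:padic} and the separation of points by characters.
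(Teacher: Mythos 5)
Your overall architecture --- first identify the $p$-part $u_p$ of $u$ with a group element $p$-adically, then use Theorem~\ref{th:padic}(1) to squeeze the partial augmentations of $u$ --- is not the route behind the paper's statement (Theorem~\ref{th:block1} is obtained in \cite{CM19} from the quantitative Brauer-tree inequality of Theorem~\ref{th:main_inequality}, applied directly to the hypothetical unit of order $pq$, with no identification of its $p$-part with a group element), and your route has a genuine gap at its crux. The claim that $u_p$ is conjugate in $\ZZ_p G$ to some $h\in G$ is not proved by your sketch. Your assertion that the eigenvalues of the action of $u_p$ ``determine the decomposition of $M$ into the finitely many indecomposable $RC_p$-lattices'' is false: when $p$ is unramified there are exactly three indecomposable $RC_p$-lattices ($R$, $R[\zeta_p]$ and $RC_p$ itself), and the eigenvalue multiplicities give only two independent numbers for these three multiplicities --- this underdetermination is precisely why the lattice method must pass to the residue field and compare with the $kG$-composition factors to extract anything. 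Moreover, no argument is given for why the surviving constraints force the eigenvalue pattern of $u_p$ to be that of a group element, and agreement under the characters of a single defect-one block could not pin down all partial augmentations anyway, since inverting \eqref{eq:character} requires all irreducible characters. In fact, rational conjugacy of units of order $p$ under the hypothesis that the Sylow $p$-subgroup has order $p$ is not known in general; the whole point of Theorem~\ref{th:main_inequality} is that it yields the order-$pq$ statement while bypassing that question.

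There is a second, independent error: even granting that $u_p$ is rationally conjugate to $h$, your inference ``so that $u_p$ is rationally, and then $p$-adically, conjugate to $h$'' is invalid, because conjugacy in $\QQ G$ does not imply conjugacy in $\ZZ_p G$, and Theorem~\ref{th:padic}(1) genuinely requires the latter. The paper itself exhibits how different the two notions are: in Proposition~\ref{prop:A7} the hypothetical unit $u$ of order $4$ in $\V(\ZZ A_7)$ has $u^2$ conjugate in $\QQ A_7$ to an element of the class $2a$, yet the proposition proves that $u^2$ is \emph{not} conjugate in $\ZZ_2 A_7$ to a trivial unit. If your inference were valid, it would immediately contradict the existence of that $u$ and settle (ZC1) for $A_7$ --- which the paper explicitly states it cannot do. (A smaller, fixable issue: at the end, $\chi(u)=\chi(h)$ for all irreducible $\chi$ does not imply $D(u)\sim D(h)$, as equal traces do not give conjugacy of matrices; but once you know that $\varepsilon_C(u)\neq 0$ only for $C=h^G$, you can instead apply Proposition~\ref{prop:partaugs}(iii) with $D=\{1\}$ and $p^j=p$ to get $1\equiv\varepsilon_{\{1\}}(u^p)=0 \bmod p$, a contradiction. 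The two problems above are the substantive ones.)
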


This theorem can be used to directly prove (PQ) for all alternating and symmetric groups (with the exception of degree 6), 24 of the 26 sporadic simple groups (with two applications of HeLP needed) and several series' of almost simple groups of Lie type of small rank, see \cite{CM19}.    

Theorem~\ref{th:block1} follows from a quantitative observation which works more general for blocks of defect $1$ and which can also be applied for units which are not necessarily of order $pq$. Recall that $\mu(\zeta,u,\chi)$ denotes the multiplicity of $\zeta$ as an eigenvalue of $D(u)$ where $u \in \V(\ZZ G)$ and $D$ is an ordinary representation of $G$ with character $\chi$. The next theorem uses Brauer trees in its formulation and we refer the reader to \cite[Section 4.12]{LP10} for a description of the theory for blocks of defect 1 which is the case relevant for us. 

\begin{theorem}\label{th:main_inequality}
Let $p$ be an odd prime and $B$ a $p$-block of defect 1 of a finite group $G$ with non-exceptional ordinary characters $\chi_1, \ldots ,\chi_e$  such that $\chi_1$ is a leaf of the Brauer tree of $B$. Let $\theta_1, \ldots ,\theta_t$ be the exceptional characters in $B$. Set $\chi_{e+1}=\theta_1+ \ldots +\theta_t$. Let $\delta_i$ be the sign of $\chi_i$ such that $\delta_{1}=1$. Let $u\in \V(\ZZ G)$ be of order $pm$ with $p$ not dividing $m$, let $\xi$ be any $m$-th root of unity and $\zeta$ a primitive $p$-th root of unity. Then 
$$0 \leqslant \mu(\xi , u, \chi_1) + \delta_{e+1}\cdot \mu(\xi \cdot \zeta, u, \chi_{e+1})+ t \sum_{i=1}^{e}\delta_i\cdot \mu(\xi \cdot \zeta, u, \chi_i).$$ 
\end{theorem}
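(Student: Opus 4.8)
The plan is to run the lattice method of Section~\ref{sec:Lattice} in its favourable case $\ell = 1$, exploiting that a block of defect $1$ has cyclic defect group $C_p$ and that $p$ can be kept unramified. Write $u = u_p u_{p'}$ with $u_p$ the $p$-part (of order $p$) and $u_{p'}$ the $p'$-part (of order $m$), which commute. Following step (2) of the method I would choose a complete discrete valuation ring $R$ of characteristic $0$ containing a primitive $m$-th root of unity and with $p$ unramified in its field of fractions over $\mathbb{Q}_p(\chi_1,\dots,\chi_{e+1})$; using the Galois-stable sum $\chi_{e+1}=\theta_1+\dots+\theta_t$ instead of the individual exceptional characters is exactly what keeps us in the unramified setting. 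Let $k$ be the residue field. For each character $\chi$ in the block let $M_\chi$ be the corresponding $RG$-lattice, and decompose $M_\chi=\bigoplus_\xi (M_\chi)_\xi$ according to the eigenvalue $\xi$ of $u_{p'}$; each $(M_\chi)_\xi$ is then an $R\langle u_p\rangle = RC_p$-lattice.

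First I would read off the multiplicities from the lattice structure. Since $p$ is unramified in $R$, the only indecomposable $RC_p$-lattices are the trivial lattice $R$, the rank-$(p-1)$ lattice $R[\zeta]$ on which $u_p$ acts with eigenvalues all primitive $p$-th roots of unity, and the free lattice $RC_p$ (this is the classical Diederichsen--Reiner classification, and is precisely the reason for the choice of $R$ in step (3)). Hence $(M_\chi)_\xi \cong a_\chi R \oplus b_\chi R[\zeta] \oplus c_\chi RC_p$ for non-negative integers $a_\chi,b_\chi,c_\chi$, and comparing eigenvalues gives $\mu(\xi,u,\chi) = a_\chi + c_\chi$ and $\mu(\xi\zeta^k,u,\chi) = b_\chi + c_\chi$ for every $1 \le k \le p-1$; in particular $\mu(\xi\zeta,u,\chi)=b_\chi+c_\chi$ is independent of the chosen primitive root. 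Reducing modulo the maximal ideal and using $\overline{R} = J_1$, $\overline{R[\zeta]} = J_{p-1}$, $\overline{RC_p}=J_p$, where $J_j$ denotes the uniserial indecomposable $kC_p$-module of dimension $j$, I obtain $\overline{(M_\chi)_\xi} \cong a_\chi J_1 \oplus b_\chi J_{p-1}\oplus c_\chi J_p$ as a $k\langle \overline{u}_p\rangle$-module.

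Next I would feed in the Brauer tree. For a block of defect $1$ the decomposition numbers are $0$ or $1$, and the composition factors of $\overline{M_\chi}$ as a $kG$-module are exactly the simple modules on the edges incident to the vertex of $\chi$; the $t$ exceptional characters all sit at a single vertex, which is why they enter only through the $k$-independent sum $\chi_{e+1}$. Two-colour the bipartite tree and let $\delta_i\in\{\pm 1\}$ record the colour of the vertex of $\chi_i$, normalised by $\delta_1 = 1$ since $\chi_1$ is a leaf. The idea is then to assemble the signed, $t$-weighted combination of the statement and, via step (4) of the method, to recognise it as a genuine non-negative invariant -- a socle dimension, i.e.\ a count of Jordan blocks, of a $k\langle\overline{u}\rangle$-module built from the $\xi$-parts of the reductions. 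The decisive cancellations are: along each edge the two endpoints carry opposite signs, so the a priori undetermined free parts $c_\chi$ (the $J_p$-summands, which the eigenvalues alone cannot pin down) cancel in pairs; the factor $t$ together with the relation $et = p-1$ balances the single exceptional vertex against its $e$ neighbours; and the leaf $\chi_1$ contributing the trivial-$u_p$ multiplicity $\mu(\xi,u,\chi_1)=a_{\chi_1}+c_{\chi_1}$ rather than $\mu(\xi\zeta,u,\chi_1)$ supplies the boundary term at which the telescoping along the tree closes.

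The main obstacle I expect is precisely step (4): the Jordan type of $\overline{(M_\chi)_\xi}$ over $kC_p$ is \emph{not} the sum of the Jordan types of its $kG$-composition factors, but is related to them only by dominance-order inequalities coming from the socle/radical filtration (the Young-tableaux combinatorics alluded to in Section~\ref{sec:Lattice}). One must check that these inequalities point in the direction that renders the signed combination non-negative, and that the bookkeeping at the exceptional vertex is compatible both with the multiplicity $t$ and with the planar embedding of the tree. Once the cancellation of the $c_\chi$-terms is established and the surviving terms are identified with a non-negative count of Jordan blocks, the inequality follows. The hypothesis that $p$ is odd enters in particular by guaranteeing $p-1 \ge 2$, so that $R[\zeta]$ is a genuinely distinct indecomposable from both $R$ and $RC_p$ and the three-term Jordan analysis does not degenerate.
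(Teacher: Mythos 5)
First, a remark on the ground rules of the comparison: the paper itself contains no proof of Theorem~\ref{th:main_inequality} --- it is quoted from \cite{CM19} --- so your attempt can only be measured against the lattice-method framework of Section~\ref{sec:Lattice} and against what a correct proof of the inequality requires. Your first two paragraphs are correct and do match that framework: the splitting $u=u_pu_{p'}$, the choice of an unramified complete $R$, the trichotomy of indecomposable $RC_p$-lattices $R$, $R[\zeta]$, $RC_p$, the identifications $\mu(\xi,u,\chi)=a_\chi+c_\chi$ and $\mu(\xi\zeta,u,\chi)=b_\chi+c_\chi$, the reductions $J_1$, $J_{p-1}$, $J_p$, and the fact that the $kG$-composition factors of $\overline{M_\chi}$ are the edges incident to the vertex of $\chi$. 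The problem is that your argument stops exactly where the theorem begins: the last two paragraphs do not prove the inequality, they assert that a proof should exist (``one must check that these inequalities point in the direction that renders the signed combination non-negative'', ``once the cancellation \ldots is established \ldots the inequality follows''). That is a restatement of the claim, not an argument, and you acknowledge as much.

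Moreover, the tools you allow yourself --- decomposition numbers at each vertex plus dominance-order constraints relating a module's Jordan type to those of its composition factors --- provably cannot close this gap. Take $p=3$ and a tree which is a path $\chi_1 - \chi_2 - \chi_3$ with no exceptional vertex, so $t=1$, $e=2$, $\chi_{e+1}=\chi_3$, $\delta_1=\delta_3=1$, $\delta_2=-1$; suppose the two edge modules have (in the relevant $\xi$-component) Jordan types $(2)$ and $(1,1)$. Then the leaves, having a single composition factor, have types $(2)$ and $(1,1)$, while the middle vertex may, as far as your constraints go, have type $(2,2)$: its parts lie in $\{1,p-1,p\}$, it dominates $(2,1,1)$, and even the dimension bookkeeping $\left(2+(-4)+2=0\right)$ is satisfied; yet the signed sum of the theorem equals $0+0+1\cdot(1-2)=-1<0$. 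So the inequality is not a consequence of per-vertex information at all; any proof must use a mechanism linking lattices at \emph{adjacent} vertices of the tree, and this is what is missing from your sketch. The mechanism behind the quoted result is block-theoretic: for an edge $S=\{v,w\}$ the projective indecomposable $RG$-lattice $P_S$ has ordinary character $\chi_v+\chi_w$ (the exceptional vertex entering only through $\chi_{e+1}$), giving an exact sequence $0\to L_w\to P_S\to L_v\to 0$; since $RG$ is a free $R\langle u\rangle$-module (a standard lemma in this area, resting on the Berman--Higman theorem), $(P_S)_\xi$ is a free $RC_p$-module, and Schanuel's lemma together with the fact that the syzygy operator over $RC_p$ exchanges $R$ and $R[\zeta]$ yields $a_w=b_v$ and $b_w=a_v$, i.e.
\begin{equation*}
\mu(\xi,u,\chi_w)-\mu(\xi\zeta,u,\chi_w)=-\bigl(\mu(\xi,u,\chi_v)-\mu(\xi\zeta,u,\chi_v)\bigr),
\end{equation*}
so this difference alternates with the bipartition sign $\delta$ across the tree. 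Combining this with the classical vanishing of $\delta_{e+1}\chi_{e+1}+t\sum_{i=1}^{e}\delta_i\chi_i$ on $p$-regular elements --- evaluated at the powers of $u_{p'}$, whose partial augmentations are supported on $p$-regular classes by Proposition~\ref{prop:partaugs}(ii) --- to compare the dimensions of the $\xi$-components, the signed sum collapses (here $et=p-1$ is used) to the manifestly non-negative quantity $\mu(\xi\zeta,u,\chi_1)$; note that my configuration above violates precisely the alternating relation. In particular, your heuristic that ``the free parts $c_\chi$ cancel in pairs along edges'' has no mechanism behind it, since nothing in your constraint set relates summands of lattices sitting at different vertices.
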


An example of the application of this theorem is given in the next subsection and also in Section~\ref{sec:A7}.

\subsection{More open problems and an example}
As evident from the discussion above, the lattice method has not been applied by many researchers yet, in particular for questions different from (PQ). Hence one can hope that studying further examples will lead to new ideas. A generalization of Theorem~\ref{th:block1} for other types of Sylow $p$-subgroups would be of interest, this might need an improved understanding on how the eigenvalues of a unit influence the structure of modules it acts on, i.e.\ (2) in the process described above.

\begin{problem}\label{prob:more_gen_syl} Can Theorem~\ref{th:block1} be generalized for other Sylow $p$-subgroups? In particular for cyclic groups or elementary-abelian groups of rank $2$? \end{problem}

In fact an equivalent formulation of Theorem~\ref{th:block1} more accurately reflecting its proof, would be to assume that the principal $p$-block of $G$ has defect group isomorphic to $C_p$. Thus a solution of Problem~\ref{prob:more_gen_syl} would probably involve some deeper knowledge of modular representation theory, especially for non-cyclic Sylow $p$-subgroups.

(PQ) also remains open for some simple groups of rather small order.

\begin{problem} Does (PQ) have a positive answer for $\operatorname{PSL}(2,27)$? Does it have a positive answer for $\operatorname{Sz}(32)$, i.e.\ the Suzuki group defined over a field with 32 elements?  \end{problem}

When using the reduction of (PQ) to almost simple groups, one also has to take care of those groups, where the socle is a sporadic simple groups. As mentioned above, major progress was made using the lattice method and, in particular, Theorem~\ref{th:block1}. However, two groups withstood these attempts, namely the sporadic simple O'Nan group $O'N$ and the sporadic simple monster group $M$. Note that both have trivial outer automorphism group. Hence an obvious question is the following:
\begin{problem} Does (PQ) have an affirmative answer for 
 the sporadic simple O'Nan group $O'N$ and the sporadic simple Monster group $M$.
\end{problem}

Of course for almost simple groups of Lie type many special techniques are available which have rarely been used in the study of the unit groups of their integral group rings. As there are only two almost simple groups left which are not of Lie type for which (PQ) remains open, this raises the following question.

\begin{problem} How can the generic knowledge about a finite group of Lie type $G$ be used to obtain results on the structure of $\V(\ZZ G)$?\end{problem}

In combination with the HeLP method one might think e.g. of using generic characters such as given by Harish-Chandra or Deligne-Lusztig induction, cf. \cite{DM91} for an introduction to the topic. 

A further possibility would be a more systematic application of the lattice method not only to study (PQ), but also other problems, such as (ZC1), (KP) or the Spectrum Problem. We give an example of such an application.

\begin{proposition}
Let $G=J_1$ be the first sporadic simple Janko group. Then the Spectrum Problem has a positive answer for $G$, i.e.\ the finite orders of elements in $G$ and $\V(\ZZ G)$ coincide.
\end{proposition}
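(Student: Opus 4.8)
The plan is to determine the element orders occurring in $G = J_1$ and the possible orders of torsion units in $\V(\ZZ G)$, and then show the two sets coincide. First I would recall the character table and prime structure of $J_1$: its order is $2^3 \cdot 3 \cdot 5 \cdot 7 \cdot 11 \cdot 19 = 175560$, and the element orders occurring in $G$ are $1, 2, 3, 5, 6, 7, 10, 11, 15, 19$. By Theorem~\ref{th:CL} any torsion unit in $\V(\ZZ G)$ has order dividing the exponent of $G$, so the candidate orders for units not already realized in $G$ are obtained from products of the primes $2,3,5,7,11,19$ subject to this exponent bound. The goal is to rule out every such order not appearing in the list above. Since $J_1$ contains elements of order $pq$ only for the pairs realized above, the Spectrum Problem here is essentially a strengthened form of (PQ) together with a check that no higher prime-power or larger mixed order can occur.

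Next I would dispose of the bulk of the candidate orders using the HeLP method of Theorem~\ref{th:LP} together with the vanishing results of Proposition~\ref{prop:partaugs}. The Luthar--Passi inequalities, applied with the ordinary (and where useful $p$-Brauer) characters of $J_1$, eliminate most mixed orders outright: for a candidate order $n$ one writes the system of linear inequalities in the unknown partial augmentations $\varepsilon_C(u^d)$, and if the only nonnegative integer solutions force a contradiction (for instance an empty feasible set), then no unit of order $n$ exists. In practice this can be run via \texttt{HeLP\_PQ} or the order-by-order commands of the \textsf{HeLP}-package on \texttt{CharacterTable("J1")}. I expect HeLP alone to clear almost all candidates, leaving only a small number of recalcitrant mixed orders --- the documented hard case being units of order $15$ (the pair $\{3,5\}$), which is exactly the kind of situation the lattice method was designed for.

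For the remaining critical order I would invoke the block-theoretic machinery, specifically Theorem~\ref{th:block1} and its quantitative underpinning Theorem~\ref{th:main_inequality}. The key is that one of the relevant primes has cyclic, ideally order-$p$, Sylow subgroup in $J_1$: since $7$, $11$, and $19$ each divide $|J_1|$ only to the first power, the Sylow $p$-subgroup is $C_p$ for these primes, and Theorem~\ref{th:block1} then immediately settles (PQ) at those vertices, forbidding units of order $7q$, $11q$, $19q$ unless $G$ already has such elements. For the pair $\{3,5\}$, where $3$ and $5$ both have order-$p$ Sylow subgroups as well (since $|J_1|$ is divisible by $3$ and $5$ only once each), Theorem~\ref{th:block1} again applies to rule out order $15$, because $J_1$ has no element of order $15$. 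This reduces the entire problem to primes $2,3,5,7,11,19$ whose Sylow subgroups are either cyclic of prime order or the small $2$-group $C_2^3$, so the only genuinely nontrivial work concerns orders built from the prime $2$.

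The main obstacle I anticipate is therefore twofold. First, orders involving the prime $2$ are not covered by Theorem~\ref{th:block1}, since the Sylow $2$-subgroup is elementary-abelian of rank $3$ rather than cyclic; for these (namely excluding units of orders such as $4$, $8$, $14$, $22$, $38$, and any $2q$ with $q$ a prime not forming an element order in $G$) I would fall back on HeLP, and where that is insufficient, on the lattice method directly, analyzing the $RC_p$-module structure of the relevant lattices as in the $A_6$ computation reviewed above. Second, one must be careful that ruling out \emph{mixed} orders is not by itself enough: the Spectrum Problem also demands that no prime power $p^k$ occurs in $\V(\ZZ G)$ beyond those in $G$, but by the classical result of Cohn--Livingstone cited as \cite{CL65} the $p$-power orders are automatically governed by $G$, so this direction is free. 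Assembling these pieces --- Cohn--Livingstone for prime powers, Theorem~\ref{th:block1} for all the odd primes with cyclic Sylow subgroups, and HeLP (supplemented by the lattice method) for the even part --- yields that the orders of torsion units in $\V(\ZZ J_1)$ are exactly $1,2,3,5,6,7,10,11,15,19$, matching the element orders of $J_1$ and proving the Spectrum Problem for $G$.
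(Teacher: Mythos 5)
Your proposal has a genuine gap: it never confronts the one order that actually requires work, namely $30$, and instead misidentifies the hard case as order $15$. But $J_1$ \emph{has} elements of order $15$ (classes $15a$, $15b$ in your own list of element orders), so there is nothing to exclude there, and your claim that Theorem~\ref{th:block1} rules out order $15$ ``because $J_1$ has no element of order $15$'' is simply false. The problematic order is $30 = 2\cdot 3\cdot 5$: every proper divisor ($2,3,5,6,10,15$) occurs as an element order of $J_1$, so no (PQ)-type statement about orders $pq$ --- in particular no application of Theorem~\ref{th:block1}, which is intrinsically a two-prime result --- can say anything about it. Your three-pronged framework (Cohn--Livingstone for prime powers, Theorem~\ref{th:block1} for odd prime pairs, HeLP/lattice for ``the even part'') therefore has a hole exactly at order $30$, and HeLP alone does not close it: the paper's point is that \texttt{HeLP\_ZC} on \texttt{CharacterTable("J1")} leaves order $30$ open. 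The missing idea is the reduction plus block-theoretic step: a hypothetical unit $u_0$ of order $30$ has $5$-th power $u = u_0^5$ of order $6$ whose partial augmentations are forced to be $(\varepsilon_{2a}(u), \varepsilon_{3a}(u), \varepsilon_{6a}(u)) = (-2,3,0)$, and this distribution is killed by Theorem~\ref{th:main_inequality} applied to the principal $3$-block, which has defect $1$ (Sylow $3$-subgroup of order $3$) and Brauer tree $\chi_1 - \chi_6 - \chi_4$; computing $\mu(1,u,\chi_1)=1$, $\mu(\zeta,u,\chi_1)=0$, $\mu(\zeta,u,\chi_6)=16$, $\mu(\zeta,u,\chi_4)=14$ yields $0 \leqslant 1 + 0 - 16 + 14 = -1$, a contradiction.

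Two smaller corrections. First, your claim that orders involving the prime $2$ escape Theorem~\ref{th:block1} is wrong: the theorem only requires \emph{one} of the two primes to have Sylow subgroup of order $p$, so orders $14$, $22$, $38$ are covered by taking $p \in \{7,11,19\}$ and $q = 2$; the elementary-abelian Sylow $2$-subgroup is irrelevant for these. Second, there is no need to exclude units of order $4$ or $8$ by any method: the exponent of $J_1$ is $2\cdot 3\cdot 5\cdot 7\cdot 11\cdot 19$, which is squarefree, so Theorem~\ref{th:CL} already forbids them. Once these points are fixed, the residual candidates are exactly the two-prime orders (handled by Theorem~\ref{th:block1} or HeLP) and order $30$, and the proof stands or falls with the order-$30$ argument you omitted.
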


\begin{proof}
The HeLP method was used to investigate (ZC1) for $G$ in \cite{BJK11} and these results can quickly be reproduced using the command \texttt{HeLP\_ZC} for \texttt{C := CharacterTable("J1")}. We obtain that to prove the result it remains to exclude the existence of elements of order $30$ in $\V(\ZZ G)$. We will use the notation for the conjugacy classes of $G$ as in the above character table. Looking at the output of \texttt{HeLP\_sol[30];} we see that it is in fact sufficient to prove that there is no element $u \in \V(\ZZ G)$ of order $6$ such that $(\varepsilon_{2a}(u), \varepsilon_{3a}(u), \varepsilon_{6a}(u)) = (-2,3,0)$ as the $5$-th power of any element of order 30 in $\V(\ZZ G)$ would be conjugate in $\QQ G$ to $u$.

The Sylow $3$-subgroup of $G$ has order $3$ which means that the principal $3$-block $B$ of $G$ has defect $1$. The Brauer tree of $B$ has no exceptional vertex and the form
\[
\begin{tikzpicture}
\node[label=north:{$\chi_1$}] at (0,1.5) (1){};
\node[label=north:{$\chi_6$}] at (1.5,1.5) (2){};
\node[label=north:{$\chi_4$}] at (3,1.5) (3){};
\foreach \p in {1,2,3}{
\draw[fill=white] (\p) circle (.075cm);
}
\draw (.075,1.5)--(1.425,1.5);
\draw (1.575,1.5)--(2.925,1.5);
\end{tikzpicture}
\] 
where the ordinary characters labeling the tree have names as in the \textsf{GAP} character table library. This Brauer tree can be obtained from the decomposition matrix of $G$ with respect to the prime $3$, so via \texttt{DecompositionMatrix(C mod 3);} in \textsf{GAP}. Alternatively one can consult \cite{HL89}. We are hence in a position to apply Theorem~\ref{th:main_inequality}. Here $\delta_1 = \delta_4 = 1$, $\delta_6 = -1$ and there is no exceptional vertex, i.e. $t = 1$ and one can choose $\chi_{e+1}$ among the characters $\{\chi_1, \chi_4, \chi_6 \}$.

To do that using the \textsf{HeLP}-package set \texttt{s := [[1],[1],[2,-3,0]];} which is just the distribution of partial augmentations corresponding to $u$. Denote by $D_i$ a representation of $G$ corresponding to the character $\chi_i$ and let $\zeta$ be a primitive complex $3$-rd root of unity. Then we can run the command \texttt{HeLP\_MultiplicitiesOfEigenvalues(Irr(C)[i], 6, s);} for $i \in \{4,6\}$ to obtain the multiplicities $\mu(\zeta, u, \chi_6)$ and $\mu(\zeta, u, \chi_4)$ as $16$ and $14$ respectively. Clearly $D_1(u) = 1$ and thus $\mu(\zeta, u, \chi_1) = 0$, $\mu(1, u, \chi_1) = 1$. So setting $\xi = 1$ in Theorem~\ref{th:main_inequality} we get the inequality
$$0 \leqslant \mu(1, u, \chi_1) + \mu(\zeta, u, \chi_1) - \mu(\zeta, u, \chi_6) + \mu(\zeta, u, \chi_4) = -1,$$
which is a contradiction. 
\end{proof}

\section{Alternating Group of Degree 7}\label{sec:A7}

We will show how much can be achieved in the study of (ZC1) for the alternating group of degree $7$ using the HeLP method and the lattice method. Both methods turn out to be able to handle certain cases, but one critical case remains unsolved. By exhibiting this case we hope to inspire the search for an additional argument. We will not carry out all calculations needed for the HeLP method in detail, but only calculate some examples and give the \textsf{GAP}-commands which can be used to achieve the rest. 

We set $G= A_7$ throughout this paragraph. We will use \textsf{GAP}-notation for the conjugacy classes of $G$. We prove:

\begin{proposition}\label{prop:A7}
Let $G= A_7$ and assume that $u \in \V(\ZZ G)$ is an element of finite order which is not conjugate to a trivial unit in $\QQ G$. Then $u$ is of order $4$ and the non-vanishing partial augmentations of $u$ are $\varepsilon_{2a}(u) = 2$ and $\varepsilon_{4a}(u) = -1$. Moreover $u^2$ is not conjugate in the $2$-adic group ring $\ZZ_2 G$ to a trivial unit.  
\end{proposition}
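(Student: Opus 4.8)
\emph{Plan and Stage 1 (HeLP reduction).} The plan is to first cut down, by the HeLP method, to a single nontrivial distribution of order $4$, and only then to attack the $2$-adic assertion. By Theorem~\ref{th:CL} the order $n$ of $u$ divides the exponent $420=2^2\cdot3\cdot5\cdot7$ of $A_7$. For each divisor $n$ I would feed the ordinary character table of $A_7$, together with the $p$-Brauer tables for the primes $p\nmid n$ (Hertweck's enhancement), into the inequalities of Theorem~\ref{th:LP}, discarding the a priori vanishing partial augmentations via Proposition~\ref{prop:partaugs}. It suffices to treat the element orders $2,3,5,6,7$ and the mixed orders $10,12,14,15,21,35$ directly: every remaining composite divisor of $420$ is divisible by one of $10,12,14,15,21,35$, so if $m\mid n$ is such a base order for which no unit exists, then $u^{n/m}$ would be a unit of order $m$, a contradiction. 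For the element orders $2,3,5,6,7$ the inequalities leave only the trivial distributions, and for the six mixed orders (none of which occur in $A_7$) they leave no admissible solution at all, which incidentally settles (PQ) for $A_7$. The sole survivor is $n=4$: the relevant classes are $1a,2a,4a$; Berman--Higman gives $\varepsilon_{1a}(u)=0$, normalisation gives $\varepsilon_{2a}(u)+\varepsilon_{4a}(u)=1$, and Proposition~\ref{prop:partaugs}(iii) at $p=2$ (only $4a$ squares into $2a$) forces $\varepsilon_{4a}(u)$ odd; the HeLP inequalities then cut the admissible integer pairs down to $(\varepsilon_{2a}(u),\varepsilon_{4a}(u))\in\{(0,1),(2,-1)\}$. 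Discarding the trivial pair yields (a) and (b).

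\emph{Stage 2 (the $2$-adic statement).} Since order-$2$ units are rationally conjugate to $2a$ by Stage 1, $u^2$ carries the trivial distribution, and under every ordinary representation $u^2$ has exactly the eigenvalues of an involution $z\in 2a$. Hence no character value, and—applying Theorem~\ref{th:padic}(1) to $u^2$ at $p=2$, whose only non-vanishing partial augmentation is at $2a$—no ``elementary'' $2$-adic datum, can separate $u^2$ from $z$; the separation must be read off the integral lattice. I would therefore fix an ordinary irreducible character $\chi$ of $A_7$ whose $2$-modular reduction is reducible, realise it on a $\ZZ_2$-lattice $L$, and study $L$ as a $\ZZ_2\langle u\rangle$-lattice with $\langle u\rangle\cong C_4$. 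The key feature is that $u$ has eigenvalues $\pm i$ (with multiplicities fixed by $(2,-1)$); combined with the $\FF_2 G$-module structure of $\overline L=L/2L$ coming from the decomposition matrix, this constrains the restriction of $L$ to $\langle u^2\rangle\cong C_2$, hence the number of regular $\ZZ_2 C_2$-summands, equivalently the rank of $\overline{u^2}-1$ on $\overline L$ (the number of size-$2$ Jordan blocks). A genuine involution $z\in 2a$ produces a value of this rank computable directly from the $\FF_2 G$-module $\overline L$. Exhibiting a $\chi$ for which the two values disagree shows $u^2\not\sim_{\ZZ_2 G}z$, and since $2a$ is the only class of involutions this is exactly (c).

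\emph{Main obstacle.} Stage 1 is a finite, machine-checkable computation; the weight lies in Stage 2, and two features make it delicate. First, $u$ has order $2^2$, so in the language of the lattice method $p=2$, $\ell=2$, $m=1$—precisely the regime the method finds hard, the representation theory of $\ZZ_2 C_4$ being far less transparent than that of $\ZZ_2 C_2$. Second, $2$ ramifies in $\QQ_2(i)$, the field generated by the eigenvalues of $u$, so the clean unramified analysis is unavailable and one must work out the relevant indecomposable $\ZZ_2 C_4$-lattices by hand before restricting to $\langle u^2\rangle$. Finally, it is worth recording why even a successful Stage 2 does not close the problem. To prove (ZC1) for $A_7$ it would be enough to conjugate the $2$-part of $u$—namely $u$ itself—into $G$ inside $\ZZ_2 G$, for then Theorem~\ref{th:padic}(1) would force the $2$-part of every non-vanishing class to have order $4$, whence $\varepsilon_{2a}(u)=0$, contradicting $\varepsilon_{2a}(u)=2$. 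Statement (c) says that already the square $u^2$ resists $2$-adic conjugation into $G$, so this natural route is blocked at its very first step. This is exactly what singles out the order-$4$ case of $A_7$ as the one critical case the present methods leave open.
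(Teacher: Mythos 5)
Your Stage~1 contains a factual error that breaks the proof at the point the paper works hardest, before any $2$-adic considerations: the claim that for order $6$ the HeLP inequalities ``leave only the trivial distributions.'' They do not. Even using the $5$- and $7$-Brauer tables, HeLP leaves five non-trivial distributions for a unit $u$ of order $6$, namely $(\varepsilon_{2a}(u),\varepsilon_{3a}(u),\varepsilon_{3b}(u),\varepsilon_{6a}(u)) \in \{(-2,1,2,0),(2,0,0,-1)\}$ when $u^2$ is rationally conjugate to $3a$, and $(\varepsilon_{2a}(u),\varepsilon_{3a}(u),\varepsilon_{3b}(u),\varepsilon_{6a}(u)) \in \{(-2,2,1,0),(0,1,-1,1),(2,-1,1,-1)\}$ when $u^2$ is rationally conjugate to $3b$. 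This is the same phenomenon that already defeats HeLP for $A_6$, and it is exactly why the lattice method is needed here. The paper eliminates the first four tuples by the lattice argument applied to the two $14$-dimensional characters $\chi_5,\chi_6$ of the principal $3$-block: these are realizable by integral (Specht) representations, integrality forces the multiplicities of $-1,-\zeta,-\zeta^2$ as eigenvalues of $D_5(u)$ and $D_6(u)$ to coincide, and this yields the relation $\varepsilon_{3a}(u)-\varepsilon_{3b}(u)+\varepsilon_{6a}(u)=\pm 1$, which the four tuples violate. The last tuple $(2,-1,1,-1)$ is then excluded by the defect-one inequality of Theorem~\ref{th:main_inequality} applied to the $3$-block of defect $1$ containing $\chi_2,\chi_7,\chi_8$. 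Without an argument of this kind you cannot conclude that $u$ has order $4$; as written, your proposal would simply fail once the order-$6$ computation is actually run.

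Stage~2 has the right shape---it is essentially the paper's argument in outline: compare the number of size-$2$ Jordan blocks of $\overline{u}^2$ and of an involution $g\in 2a$ on the mod-$2$ reduction of a suitable integral lattice---but it is a plan, not a proof, and the decisive ingredient is missing. The paper takes $\chi_2$, the $6$-dimensional deleted permutation character, with its standard $\ZZ$-lattice $L_2$: for $g=(1,2)(3,4)$ one reads off two size-$2$ Jordan blocks on $\overline{L}_2$, while $D_2(u)\sim\operatorname{diag}(1,1,1,1,i,-i)$, and the key fact---which you defer as ``work out the relevant indecomposable $\ZZ_2C_4$-lattices by hand''---is the classification result that every indecomposable $\ZZ_2 C_4$-lattice contains each of the three simple $\ZZ_2C_4$-modules (trivial, sign, and the two-dimensional one with eigenvalues $\pm i$) at most once as a composition factor \cite[Theorem 4.1]{BG64}. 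This bounds the non-trivial part of $L_2$ as a $\ZZ_2\langle u\rangle$-lattice to a single indecomposable summand of rank $2$ or $3$, hence $\overline{u}^2-1$ has rank at most $1$ on $\overline{L}_2$, against rank $2$ for $g$; that numerical clash is the proof, and it also dissolves your ramification worry, since the needed $\ZZ_2C_4$ theory is simply quoted rather than redeveloped. On the positive side, your closing observation is correct and matches the paper's assessment: since conjugacy of $u$ in $\ZZ_2G$ to a group element would, via Theorem~\ref{th:padic}(1), contradict $\varepsilon_{2a}(u)=2$, the statement that already $u^2$ resists $2$-adic conjugation explains why this route to (ZC1) for $A_7$ is blocked.
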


\begin{remark}
\begin{enumerate}
\item If one could prove that units of order $4$ in $\V(\ZZ G)$ are in fact conjugate in $\QQ G$ to trivial units, this would also imply the following theorem, cf.\ \cite[Remark on p. 298]{Mar17}: For any finite group $H$ such that the Sylow $2$-subgroup of $H$ has order at most $8$, any $2$-subgroup of $\V(\ZZ H)$ is conjugate in $\QQ H$ to a subgroup of trivial units.
\item The HeLP method has previously been used to study (ZC1) for $G$ in \cite{Sal11}, but Salim did not show that $\V(\ZZ G)$ contains no elements of order $12$ and that elements of order $6$ in $\V(\ZZ G)$ are conjugate in $\QQ G$ to trivial units. This will explicitly be done below.
\end{enumerate}
\end{remark}

The exponent of $G$ equals $4 \cdot 3 \cdot 5 \cdot 7$ and the orders of non-trivial elements in $G$ are $\{2,3,4,5,6,7\}$. The non-trivial conjugacy classes in $G$ are $2a$, $3a$, $3b$, $4a$, $5a$, $6a$, $7a$ and $7b$. Moreover the square of an element in $6a$ lies in $3a$. Hence by Theorem~\ref{th:CL} to prove (ZC1) for $G$ we would need to show:
\begin{itemize}
\item If $u \in \V(\ZZ G)$ is of order $2$, $3$, $4$, $5$, $6$ or $7$, then $u$ is conjugate in $\QQ G$ to an element of $G$.
\item There are no elements of order 10, 12, 14, 15, 21 and 35 in $\V(\ZZ G)$.
\end{itemize}

We first approach this problem using Theorem~\ref{th:MRSW} and Proposition~\ref{prop:partaugs}. If $u \in \V(\ZZ G)$ is of order $2$, then $\varepsilon_C(u) \neq 0$ is only possible, if $C = 2a$ by Proposition~\ref{prop:partaugs}. Thus by Theorem~\ref{th:MRSW} the unit $u$ is conjugate in $\QQ G$ to a trivial unit. The same way we obtain that elements of order $5$ in $\V(\ZZ G)$ are conjugate in $\QQ G$ to trivial units. Furthermore, as the Sylow $5$-subgroup of $G$ is of order $5$ and the Sylow $7$-subgroup is of order $7$, it follows by Theorem~\ref{th:block1} that $\V(\ZZ G)$ contains no elements of order 10, 14, 15, 21, or 35. This could also be proved using the HeLP method.

We give a sample calculation for Theorem~\ref{th:LP} to show that elements of order $3$ in $\V(\ZZ G)$ are conjugate in $\QQ G$ to trivial units. So assume that $u \in \V(\ZZ G)$ is of order 3. Then $\varepsilon_C(u) \neq 0$ is only possible, by Proposition~\ref{prop:partaugs}, if $C \in \{3a, 3b \}$. Let $\chi$ be an ordinary or $p$-Brauer character, for $p \neq 3$, of $G$. Moreover let $\zeta$ be a primitive complex $3$-rd root of unity. Then, by Theorem~\ref{th:LP} and Hertweck's modular extension, for any integer $\ell$ we have that
\begin{align}\label{eq:A7Ord3}
\frac{1}{3} \Big[ \Tr_{\QQ / \QQ}(\chi(u^3)) + \Tr_{\QQ(\zeta)/\QQ}\left( [ \varepsilon_{3a}(u)\chi(3a) + \varepsilon_{3b}(u)\chi(3b) ] \zeta^{-\ell}\right)  \Big]
\end{align}
is a non-negative integer. The classes $3a$ and $3b$ are rational, i.e.\ elements of order $3$ in $G$ are conjugate to their squares, which means that $\chi(3a)$ and $\chi(3b)$ are integers. Hence $\Tr_{\QQ(\zeta)/\QQ}(\chi(3a)x) = \chi(3a)\Tr_{\QQ(\zeta)/\QQ}(x)$ for any element $x \in \QQ(\zeta)$. Furthermore, as $u$ is a normalized unit $1 = \sum_C \varepsilon_C(u) = \varepsilon_{3a}(u) + \varepsilon_{3b}(u)$ where the sum runs over all conjugacy classes $C$ in $G$. Hence substituting $\varepsilon_{3b}(u) = 1 - \varepsilon_{3a}(u)$, from \eqref{eq:A7Ord3} we get that
$$  \frac{1}{3} \Big[ \Tr_{\QQ / \QQ}(\chi(1)) + \left[\chi(3b) + \varepsilon_{3a}(u)(\chi(3a)-\chi(3b))\right]\Tr_{\QQ(\zeta)/\QQ}(\zeta^{-\ell}) \Big]$$
is a non-negative integer. Note that $\Tr_{\QQ(\zeta)/\QQ}(1) = 2$ and $\Tr_{\QQ(\zeta)/\QQ}(\zeta) = -1$. Hence putting $\ell = 0$ and $\ell = 1$ respectively we get that
$$ \frac{1}{3} \left( \chi(1) + 2 (\chi(3b) + \varepsilon_{3a}(u)(\chi(3a)-\chi(3b)) \right) $$
and
$$ \frac{1}{3} \left( \chi(1) - (\chi(3b) + \varepsilon_{3a}(u)(\chi(3a)-\chi(3b)) \right)$$
are non-negative integers. 

Now there is an irreducible $7$-modular Brauer character $\chi$ of $G$ such that $\chi(1) = 5$, $\chi(3a) = 2$ and $\chi(3b) = -1$. Hence we obtain that
$$\frac{1}{3} \left( 5 + 2 (-1 + 3\varepsilon_{3a}(u)) \right) = 1 + 2\varepsilon_{3a}(u) $$ 
and 
$$\frac{1}{3} \left( 5 - (-1 + 3 \varepsilon_{3a}(u)) \right) = 2 - \varepsilon_{3a}(u) $$
are non-negative integers, i.e. $-\frac{1}{2} \leqslant \varepsilon_{3a}(u) \leqslant 2$.

Moreover there is an irreducible $2$-modular character $\chi$ such that $\chi(1) = 4$, $\chi(3a) = -2$ and $\chi(3b) = 1$. This gives that
$$\frac{1}{3} \left( 4 + 2 (1 - 3\varepsilon_{3a}(u)) \right) = 2 - 2\varepsilon_{3a}(u) $$
and
$$ \frac{1}{3} \left( 4 - (1 -3 \varepsilon_{3a}(u)) \right) = 1 + \varepsilon_{3a}(u)$$
are non-negative integers, i.e.\ $-1 \leqslant \varepsilon_{3a}(u) \leqslant 1$. Taken together we conclude that $\varepsilon_{3a}(u) \in \{0,1 \}$ and $\varepsilon_{3b}(u) = 1 - \varepsilon_{3a}(u) \in \{1, 0\}$ which by Theorem~\ref{th:MRSW} means exactly that $u$ is conjugate in $\QQ G$ to a trivial unit.

We will not perform the analogues calculations for units of order $4$, $6$, $7$ and $12$. This can at once be done using the \textsf{HeLP}-package and the command \texttt{HeLP\_ZC(CharacterTable("A7"))}. This will give that elements of order 7 in $\V(\ZZ G)$ are conjugate in $\QQ G$ to trivial units and that elements of order 12 do not exist in $\V(\ZZ G)$. Moreover:
\begin{enumerate}
\item If $u \in \V(\ZZ G)$ is of order $4$ and not conjugate in $\QQ G$ to a trivial unit, then $\varepsilon_{2a}(u) = 2$ and $\varepsilon_{4a}(u) = -1$ and $u^2$ is conjugate to $2a$ in $\QQ G$.
\item If $u \in \V(\ZZ G)$ is of order $6$ and not conjugate in $\QQ G$ to a trivial unit, then: Either $u^2$ is conjugate in $\QQ G$ to an element in $3a$ and 
$$(\varepsilon_{2a}(u), \varepsilon_{3a}(u), \varepsilon_{3b}(u), \varepsilon_{6a}(u)) \in \{(-2,1,2,0), (2,0,0,-1) \}$$ or $u^2$ is conjugate in $\QQ G$ to an element in $3b$ and 
$$(\varepsilon_{2a}(u), \varepsilon_{3a}(u), \varepsilon_{3b}(u), \varepsilon_{6a}(u)) \in \{(-2,2,1,0), (0,1,-1,1), (2,-1,1,-1) \}.$$
\end{enumerate}

These are the results of the application of the HeLP method. Before we apply the lattice method we want to point out a difference with the results in \cite{Sal11}. Namely there the HeLP method was not sufficient to demonstrate that elements of order 12 do not exist in $\V(\ZZ G)$. The reason for this difference lies in the application of Proposition~\ref{prop:partaugs} (iii) which is used in the \textsf{HeLP}-package. We will give an example: If one would apply the HeLP method not using Proposition~\ref{prop:partaugs} (iii), then one could not exclude the existence of a unit $u \in \V(\ZZ G)$ of order 12 such that $u^2$ and $u^3$ are conjugate in $\QQ G$ to elements in $6a$ and $4a$, respectively, and $((\varepsilon_{2a}(u), \varepsilon_{3a}(u), \varepsilon_{3b}(u), \varepsilon_{4a}(u), \varepsilon_{6a}(u)) = (1,0,0,1,-1)$. But setting $p^j = 2$ and $D = 3a$ in Proposition~\ref{prop:partaugs} (iii) this would give
$$\varepsilon_{3a}(u) + \varepsilon_{6a}(u) \equiv \varepsilon_{3a}(u^2) \mod 2. $$
As $\varepsilon_{3a}(u^2) = 0$, this gives a contradiction.

We next apply the lattice method to show that if $u \in \V(\ZZ G)$ is of order $6$, then $u$ is conjugate in $\QQ G$ to a trivial unit. Assume $u \in \V(\ZZ G)$ is of order $6$ and denote by $\zeta$ a primitive complex $3$-rd root of unity. We will first use two characters from the principal $3$-block of $G$ in a similar way as it was done by Hertweck for $A_6$, cf. Section~\ref{sec:Lattice}.
We will use the parts of the character table of $G$ and the decomposition matrix with respect to the prime $3$ given in Table~\ref{tablePrincipalBlock3}. These are accessible in \textsf{GAP} using the commands \texttt{T := CharacterTable("A7");} and \texttt{DecompositionMatrix(T mod 3);} respectively. We will also use the numbering of characters as in the \textsf{GAP} character table library.

\begin{table}[h]
\caption{Part of the character table and decomposition matrix for $3$ of $A_7$}\label{tablePrincipalBlock3}\centering
\begin{subtable}[c]{0.3\textwidth} \centering
\subcaption{Part of the character table}
\begin{tabular}{l c c c c c} \hline
{ } & $1a$ & $2a$ & $3a$ & $3b$ & $6a$ \\ \hline \hline
$\chi_{1}$ & $1$ & $1$ & $1$ & $1$ & $1$  \\
$\chi_{5}$ & $14$ & $2$ & $2$ & $-1$ & $2$  \\
$\chi_{6}$ & $14$ & $2$ & $-1$ & $2$ & $-1$ \\ \hline
\end{tabular}
\end{subtable}
\qquad
\begin{subtable}[c]{0.42\textwidth} \centering
\subcaption{Part of the decomposition matrix for $3$}
\begin{tabular}{l c c} \hline
{ } & $\psi_{1}$  & $\psi_{5}$  \\ \hline\hline
$\chi_{1}$ & 1 & $\cdot$  \\
$\chi_{5}$ & 1 &  1 \\
$\chi_{6}$ & 1 & 1 \\ \hline
\end{tabular}
\end{subtable}
\end{table}

Denote by $D_i$ an ordinary representation of $G$ with character $\chi_i$. Note that for the characters in Table~\ref{tablePrincipalBlock3} we can assume that the representations are integral, as they extend to irreducible representations of $S_7$ which are all integral as Specht modules \cite[Theorem 4.12]{Jam86}. Hence whenever $\zeta$ has a certain multiplicity as an eigenvalue of a representation $D_i(u)$ or $D_i(u^2)$, then $\zeta^2$ must have the same multiplicity, as otherwise the character values would not be integral. Idem for $-\zeta$ and $-\zeta^2$.

We have $\chi_{i}(u^3) = 2$ for $i \in  \{5,6 \}$ and hence
$$D_{i}(u^3) \sim (1,1,1,1,1,1,1,1,-1,-1,-1,-1,-1,-1). $$
Assume first that $u^2$ is conjugate in $\QQ G$ to an element in $3a$. Then $\chi_{5}(u^2) = 2$ and $\chi_{6}(u^2) = -1$ giving
$$D_{5}(u^2) \sim (1,1,1,1,1,1,\zeta,\zeta^2,\zeta,\zeta^2,\zeta,\zeta^2,\zeta,\zeta^2) \ \ \text{and} \ \ D_{6}(u^2) \sim (1,1,1,1,\zeta,\zeta^2,\zeta,\zeta^2,\zeta,\zeta^2,\zeta,\zeta^2,\zeta,\zeta^2). $$
Note that that the eigenvalues of $D(u)$ are products of eigenvalues of $D(u^3)$ and $D(u^2)$.
Now we can argue as in the situation for $A_6$ described in Section~\ref{sec:Lattice} to see that the multiplicities of the eigenvalues $-1$, $-\zeta$ and $-\zeta^2$ must be the same for $D_{5}(u)$ and $D_{6}(u)$. Looking at the eigenvalues of $D_{5}(u^2)$ and $D_{6}(u^2)$ we see that this implies that the multiplicities of $\zeta$ and $\zeta^2$ as eigenvalue of $D_{6}(u)$ must be exactly $1$ bigger than for $D_{5}(u)$, e.g.\ if
$$D_{5}(u) \sim (1,1,1,1,1,1,\zeta,\zeta^2,-\zeta,-\zeta^2,-\zeta,-\zeta^2,-\zeta,-\zeta^2),$$
then
$$D_{6}(u) \sim (1,1,1,1,\zeta,\zeta^2,\zeta,\zeta^2,-\zeta,-\zeta^2,-\zeta,-\zeta^2,-\zeta,-\zeta^2).$$
This dependency of the eigenvalues of $D_{6}(u)$ on the eigenvalues of $D_{5}(u)$ can also be expressed by the equation 
\begin{align*}
3 &= \chi_{5}(u) - \chi_{6}(u) \\
 &= (2\varepsilon_{2a}(u) + 2\varepsilon_{3a}(u) - \varepsilon_{3b}(u) +2\varepsilon_{6a}(u)) - (2\varepsilon_{2a}(u) - \varepsilon_{3a}(u) + 2\varepsilon_{3b}(u) - \varepsilon_{6a}(u))  
\end{align*}
which gives
$$1 = \varepsilon_{3a}(u) - \varepsilon_{3b}(u) + \varepsilon_{6a}(u). $$
This equation is not satisfied by any of the tuples 
$$(\varepsilon_{2a}(u), \varepsilon_{3a}(u), \varepsilon_{3b}(u), \varepsilon_{6a}(u)) \in \{(-2,1,2,0), (2,0,0,-1) \}$$
which we computed using the HeLP method as the partial augmentations of $u$ in case $u^2$ is conjugate in $\QQ G$ to elements in $3a$, but $u$ is not conjugate to a trivial unit. We conclude that if $u^2$ is conjugate in $\QQ G$ to elements in $3a$, then $u$ is conjugate in $\QQ G$ to a trivial unit.

Next assume that $u^2$ is conjugate in $\QQ G$ to elements in $3b$. Then we compute  
$$D_{5}(u^2) \sim (1,1,1,1,\zeta,\zeta^2,\zeta,\zeta^2,\zeta,\zeta^2,\zeta,\zeta^2,\zeta,\zeta^2) \ \ \text{and} \ \ D_{6}(u^2) \sim (1,1,1,1,1,1,\zeta,\zeta^2,\zeta,\zeta^2,\zeta,\zeta^2,\zeta,\zeta^2) $$
(i.e. the eigenvalues from before have been interchanged). Arguing similarly as for the case before we obtain that the multiplicities of $-1$, $-\zeta$ and $-\zeta^2$ as eigenvalues of $D_{5}(u)$ and $D_{6}(u)$ must be the same. This gives then $-3 = \chi_{5}(u) - \chi_{6}(u)$ leading to
$$-1 = \varepsilon_{3a}(u) - \varepsilon_{3b}(u) + \varepsilon_{6a}(u). $$
This equation is not satisfied if 
$$(\varepsilon_{2a}(u), \varepsilon_{3a}(u), \varepsilon_{3b}(u), \varepsilon_{6a}(u)) \in \{(-2,2,1,0), (0,1,-1,1) \}.$$
From our results using HeLP we conclude that if $u$ exists then $(\varepsilon_{2a}(u), \varepsilon_{3a}(u), \varepsilon_{3b}(u), \varepsilon_{6a}(u)) =  (2,-1,1,-1)$.

We will handle this remaining case using Theorem~\ref{th:main_inequality}. The group $G$ possesses a $3$-block $B$ of defect $1$ containing the ordinary characters $\chi_2$, $\chi_7$ and $\chi_8$ which can be observed using the \textsf{GAP} commands \texttt{PrimeBlocks(T, 3).defect;} and \texttt{PrimeBlocks(T, 3).block;}. The Brauer tree of $B$ has the form
\[
\begin{tikzpicture}
\node[label=north:{$\chi_2$}] at (0,1.5) (1){};
\node[label=north:{$\chi_8$}] at (1.5,1.5) (2){};
\node[label=north:{$\chi_7$}] at (3,1.5) (3){};
\foreach \p in {1,2,3}{
\draw[fill=white] (\p) circle (.075cm);
}
\draw (.075,1.5)--(1.425,1.5);
\draw (1.575,1.5)--(2.925,1.5);
\end{tikzpicture}
\] 
Set \texttt{s := [[1],[0,1],[2,-1,1,-1]];} which is the distribution of partial augmentations of elements of order $6$ in $\V(\ZZ G)$ which we are studying. Then \texttt{HeLP\_MultiplicitiesOfEigenvalues(Irr(C)[i], 6, s);} for $i \in \{2,7,8\}$ calculates the multiplicities necessary for the application of Theorem~\ref{th:main_inequality}. We find $\mu(-1, u, \chi_2) = 0$, $\mu(-\zeta, u, \chi_2) = 1$, $\mu(-\zeta, u, \chi_7) = 2$ and $\mu(-\zeta, u, \chi_8) = 4$. So with $\xi = -1$, Theorem~\ref{th:main_inequality} gives
$$0 \leqslant \mu(-1, u, \chi_2)  + \mu(-\zeta, u, \chi_2) - \mu(-\zeta, u, \chi_7) + \mu(-\zeta, u, \chi_8) = -1, $$
contradicting the existence of $u$.

To prove Proposition~\ref{prop:A7} it remains to show that if $u \in \V(\ZZ G)$ is of order $4$ such that $(\varepsilon_{2a}(u), \varepsilon_{4a}(u)) = (2,-1)$, then $u^2$ is not conjugate in the $2$-adic group ring $\ZZ_2 G$ to a trivial unit. As $u^2$ is of order 2, if it is conjugate in $\ZZ_2 G$ to an element of $G$, then it is conjugate to an element in $2a$. Denote by $\chi_2$ the irreducible complex character of $G$ of degree $6$ which corresponds just to the natural permutation representation of $G$ from which a trivial submodule has been canceled. Denote by $D_2$ the standard $\ZZ$-representation affording $\chi_2$ and by $L_2$ the corresponding $\ZZ G$-module. We will use the bar-notation for reduction modulo the ideal $2\ZZ$, i.e.\ $\overline{L}_2$ is an $\mathbb{F}_2 G$-module. Now there is an element $g \in 2a$ (think of $(1,2)(3,4)$) such that 
$$D_2(g) = \begin{pmatrix} 
0 & 1 & 0 & 0 & 0 & 0 \\ 
1 & 0 & 0 & 0 & 0 & 0 \\ 
0 & 0 & 0 & 1 & 0 & 0 \\ 
0 & 0 & 1 & 0 & 0 & 0 \\ 
0 & 0 & 0 & 0 & 1 & 0 \\ 
0 & 0 & 0 & 0 & 0 & 1 \\ 
\end{pmatrix} $$
Thus as $\mathbb{F}_2 \langle g \rangle$-module $\overline{L}_2$ decomposes into the direct sum of two trivial modules and two indecomposable modules of dimension $2$. Recall from Section~\ref{sec:Lattice} that indecomposable $kC_{p^\ell}$-modules, for a prime $p$ and a field $k$ of characteristic $p$, are basically Jordan blocks and note that $\left(\begin{smallmatrix} 0 & 1 \\ 1 & 0 \end{smallmatrix}\right)$ has Jordan normal form $\left(\begin{smallmatrix} 1 & 1 \\ 0 & 1 \end{smallmatrix}\right)$  over $\mathbb{F}_2$.

Now let $u \in \V(\ZZ G)$ be of order $4$ such that $(\varepsilon_{2a}(u), \varepsilon_{4a}(u)) = (2,-1)$. Denote by $i$ a complex $4$-th root of unity. Then 
$$D_2(u) \sim \text{diag}(1,1,1,1,i,-i).$$
This can be computed by hand or using the \textsf{HeLP}-package via \texttt{s := [[1],[2,-1]]} (the distribution of partial augmentations of $u$) and \texttt{HeLP\_MultiplicitiesOfEigenvalues(Irr(C)[2], 4, s);} We next consider $L_2$ as a $\ZZ_2 \langle u \rangle$-module. The indecomposable $\ZZ_2 C_4$-modules are described in \cite[Section 34C]{CR81} or (more explicitly) in \cite[Section 4]{BG64}. It follows that the simple $\ZZ_2 \langle u \rangle$-modules are given by representations $u \mapsto 1$, $u \mapsto -1$ and $u \mapsto \left(\begin{smallmatrix} 0 & -1 \\ 1 & 0 \end{smallmatrix}\right)$. Note that under the last representation the image of $u$ has eigenvalues $i$ and $-i$. Moreover, an indecomposable $\ZZ_2 \langle u \rangle$-module possesses each simple module at most once as a composition factor \cite[Theorem 4.1]{BG64}. We conclude that $L_2$ as $\ZZ_2 \langle u \rangle$-module has at most one non-trivial indecomposable direct summand and this summand is of dimension $2$ (combining the eigenvalues $i$ and $-i$) or of dimension $3$ (combining $i$, $-i$ and $1$). Hence $\overline{L}_2$ as $k\langle \overline{u} \rangle$-module has also at most one non-trivial indecomposable direct summand and this summand is of dimension $2$ or $3$. 

Note that for dimension reasons an indecomposable $\mathbb{F}_2 \langle \overline{u} \rangle$-module of dimension $2$ or $3$ can not have more than one non-trivial direct indecomposable summand when viewed as $\mathbb{F}_2 \langle \overline{u}^2 \rangle$-module. This implies that $\overline{L}_2$ as $\mathbb{F}_2\langle \overline{u}^2 \rangle$-module has at most one non-trivial direct indecomposable summand. Hence the $\mathbb{F}_2 G$-module $\overline{L}_2$ has different isomorphism types as $\mathbb{F}_2 \langle \overline{u}^2 \rangle$ and $\mathbb{F}_2 \langle g \rangle$-module, meaning that $\overline{u}^2$ and $g$ are not conjugate in $\mathbb{F}_2 G$ and hence $u^2$ and $g$ are not conjugate in $\ZZ_2 G$.

We are not aware of an argument which could show that $u$ does not exist in $\ZZ G$.

\begin{problem} Prove that (ZC1) holds for $A_7$. \end{problem}

\bibliographystyle{amsalpha}
\bibliography{IndiaProc}

\end{document}